\providecommand{\U}[1]{\protect\rule{.1in}{.1in}}
\newtheorem{theorem}{Theorem}[section]
\newtheorem{conjecture}[theorem]{Conjecture}
\newtheorem{corollary}[theorem]{Corollary}
\newtheorem{example}[theorem]{Example}
\newtheorem{lemma}[theorem]{Lemma}
\newtheorem{notation}[theorem]{Notation}
\newtheorem{prop}[theorem]{Proposition}
\newtheorem{proposition}[theorem]{Proposition}
\newtheorem{cor}[theorem]{Corollary}
\newtheorem{rem}[theorem]{Remark}
\def\N{\mathbb{N}}
\def\R{\mathbb{R}}
\def\Z{\mathbb{Z}}
\def\Q{\mathbb{Q}}
\def\C{\mathcal{C}}
\DeclareMathOperator{\pp}{Preper}
\DeclareMathOperator{\per}{Per}
\DeclareMathOperator{\num}{num}
\DeclareMathOperator{\den}{den}
\DeclareMathOperator{\supp}{supp}
\DeclareMathOperator{\id}{Id}
\title{Some results on the Flynn-Poonen-Schaefer conjecture}
\author{Shalom Eliahou and Youssef Fares}
\address{Shalom Eliahou, Univ. Littoral C\^ote d'Opale, EA 2597 - LMPA - Laboratoire de Math\'ematiques Pures et Appliqu\'ees Joseph Liouville, F-62228 Calais, France and CNRS, FR 2956, France} \email{eliahou@univ-littoral.fr}
\address{Youssef Fares, LAMFA, CNRS-UMR 7352, Université de Picardie, 80039 Amiens, France} \email{youssef.fares@u-picardie.fr}
\date{}
\begin{document}
\maketitle

\begin{abstract} For $c \in \Q$, consider the quadratic polynomial map $\varphi_c(x)=x^2-c$.  Flynn, Poonen and Schaefer conjectured in 1997 that no rational cycle of $\varphi_c$ under iteration has length more than $3$. Here we discuss this conjecture using arithmetic and combinatorial means, leading to three main results. First, we show that if $\varphi_c$ admits a rational cycle of length $n \ge 3$, then the denominator of $c$ must be divisible by $16$. We then provide an upper bound on the number of periodic rational points of $\varphi_c$ in terms of the number of distinct prime factors of the denominator of $c$. Finally, we show that the Flynn-Poonen-Schaefer conjecture holds for $\varphi_c$ if that denominator has at most two distinct prime factors. 
\end{abstract}

%%%%%%%%%%%%%%%%%%%%%%%%%%%%%%%%%%
\section{Introduction}
Let $S$ be a set and $\varphi :S \to S$ a self map. For $z \in S$, the \emph{orbit} \emph{of $z$} \emph{under $\varphi$} is the sequence of iterates
$$O_{\varphi }(z)=(\varphi^k(z))_{k \ge 0},$$ 
where $\varphi ^k$ is the $k^{th }$ iterate of $\varphi$ and $\varphi^0=\id_S$. We say that $z$ is \emph{periodic} under $\varphi $  if there is an integer $n \ge 1$ such that $\varphi^n(z) = z$, and then the least such $n$ is the \emph{period} of $z$. In that case,  we identify $O_{\varphi }(z)$ with the finite sequence $\C= (z, \varphi(z), \dots, \varphi^{n-1}(z))$, and we say that $\C$ is a \emph{cycle} of length $n$. The element $z$ is said to be \emph{preperiodic} under $\varphi $  if there is an integer $m \ge 1$  such that $ \varphi ^m(z)$ is periodic. For every rational fraction in $\Q(x)$ of degree $\geq 2$, its set of preperiodic points is \emph{finite}, this being a particular case of a well known theorem of Northcott \cite{Nor}. However, determining the cardinality of this set is very difficult in general. The following conjecture due to Flynn, Poonen and Schaefer~\cite{FPS} illustrates the difficulty in understanding, in general, the periodic points of polynomials, even those of degree $2$.

\begin{conjecture}\label{conj FPS} Let $c \in \Q$. Consider the quadratic map $\varphi_c \colon \Q \to \Q$ defined by\footnote{The map $x \mapsto x^2+c$ is more common in the literature, but we slightly prefer to deal with $x \mapsto x^2-c$.}  $\varphi_c(x)=x^2-c$ for all $x \in \Q$. Then every periodic point of $\varphi _c$ in $\Q$ has period at most $3$.
\end{conjecture}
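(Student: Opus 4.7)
The plan is to build on the arithmetic framework implicit in the three main results announced in the abstract. Let $c=a/b$ in lowest terms with $b>0$, and suppose for contradiction that $\varphi_c$ admits a rational cycle $\C=(z_0,z_1,\dots,z_{n-1})$ of length $n\ge 4$. A standard first observation is that all $z_i$ share a common denominator $d$, and the identity $c=z_i^2-z_{i+1}$, combined with $\gcd(a,b)=1$, forces $d^2=b$. In particular $b$ is a perfect square, so the divisibility $16\mid b$ from the paper's first main result upgrades to $256\mid b$. This will be the structural input for everything that follows.

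The first pillar of the argument is good reduction. At every prime $p\nmid b$, the map $\varphi_c$ reduces to a self-map of $\F_p$ and $\C$ reduces to a cycle of the same length $n$ in $\F_p$. For small primes this is very restrictive: if $3\nmid b$ then $n$ must be the length of a cycle of some $\varphi_{\bar c}$ on $\F_3$, and a direct enumeration shows this length is at most $3$; analogous, though weaker, bounds hold at $p=5,7,\dots$. Together with the Morton--Silverman style observation that any cycle length divides the product of cycle lengths attainable modulo good primes, this reduces the problem to the case in which every ``small'' prime below some explicit threshold divides $b$.

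The second pillar is $p$-adic analysis at the primes dividing $b$. Writing $z_i=u_i/d$ with $\gcd(u_i,d)=1$, the recursion becomes $u_{i+1}=(u_i^2-a)/d$, which forces $d\mid u_i^2-a$ for every $i$. This congruence, read at each prime power $p^e\|d$, places the images of the $u_i$ inside a very small subset of $(\Z/p^e\Z)^\times$ governed by the square roots of $a$, and the action of squaring on that subset is essentially a translation on a cyclic group whose order one can control. Combining these local pictures through the CRT, I would try to show that the only way to realise a cycle of length $n\ge 4$ is via a system of congruences that is globally inconsistent. The paper's third main result, handling $\omega(b)\le 2$, would serve as the base case; an induction on $\omega(b)$, adding one prime at a time and using the good-reduction bound at the newly dropped prime, would then extend it.

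The main obstacle, and the point at which this strategy genuinely meets the frontier, is the regime where $\omega(b)$ is large and every small prime divides $b$. In that regime good reduction supplies no information at any prime where the arithmetic of $\F_p$ is small enough to be combinatorially useful, and the $p$-adic constraints above live in $(\Z/p^e\Z)^\times$ for possibly large $e$, so they do not by themselves force a contradiction. Overcoming this would require either a new Archimedean height argument bounding $n$ in terms of $\log b$ (strong enough to beat $n\le 3$), or a global obstruction tying together the $p$-adic data at all primes dividing $b$ in a way that is sensitive to $n\ge 4$ but not to $n\le 3$. It is precisely here that the Flynn--Poonen--Schaefer conjecture remains open, and any honest proof of the full statement would have to supply a new idea at this step.
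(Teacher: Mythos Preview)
The statement you are attempting to prove is Conjecture~\ref{conj FPS}, which is \emph{open}; the paper does not prove it, and in fact explicitly states that no period higher than $5$ has been excluded. So there is no ``paper's own proof'' to compare against, and your final paragraph is the honest one: the argument you sketch runs into the same wall everyone else does once $\omega(b)$ is large and all small primes divide $b$. That alone means the proposal is not a proof.

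But several of the steps you present as settled are also wrong. First, the ``upgrade'' from $16\mid b$ to $256\mid b$ is false: the paper proves $16\mid\den(c)=d^2$, i.e.\ $4\mid d$, and $b=16$ (a perfect square) already shows that nothing stronger follows; Example~\ref{ex length 3} with $c=29/16$ is a concrete witness. Second, your good-reduction pillar is stated incorrectly. At a prime $p\nmid d$ the cycle $\C$ reduces to a cycle in $\F_p$, but its length \emph{divides} $n$ rather than equals $n$; collapse is the generic phenomenon (this is exactly why Zieve's Theorem~\ref{thm3} is needed, relating the true period $n$ to the reduced period $m$ via $n\in\{m,mr,mrp^e\}$). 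Consequently the claim ``if $3\nmid b$ then $n$ must be the length of a cycle of some $\varphi_{\bar c}$ on $\F_3$'' is false, and the ``direct enumeration'' bound $n\le 3$ does not follow. Third, the proposed induction on $\omega(b)$ by ``dropping a prime'' changes $c$ itself, so it does not relate cycles of the given $\varphi_c$ to anything; there is no mechanism here for transporting a cycle of $\varphi_c$ to a cycle of $\varphi_{c'}$ with fewer primes in $\den(c')$.

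In short: the target is a genuine open problem, your outline correctly identifies where it breaks, and the parts you treat as routine (the $256$ divisibility, same-length reduction mod $p$, and the induction step) each contain errors that would need repair even to recover the partial results the paper actually proves.
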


See also~\cite{P} for a refined conjecture on the rational \emph{preperiodic} points of quadratic maps over $\Q$. As the following classical example shows, rational points of period $3$ do occur for suitable $c \in \Q$. 
\begin{example}\label{ex length 3} Let $c = 29/16$. Then the map $\varphi_c$ admits the cycle $\C= (-1/4, -7/4,5/4)$ of  length $3$.
\end{example}

While Conjecture~\ref{conj FPS} has already been explored in several papers, it remains widely open at the time of writing. The main positive results concerning it are that period $4$ and period $5$ are indeed excluded, by Morton \cite{M} and by Flynn, Poonen and Schaefer \cite{FPS}, respectively.

\begin{theorem}[Morton]\label{thm1} For every $c\in \Q$, there is no  periodic point of $\varphi _c$ in $\Q$ of period $4$.
\end{theorem}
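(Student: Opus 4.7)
The plan is to translate a rational $4$-cycle $(z_0, z_1, z_2, z_3)$ with $z_{i+1} = z_i^2 - c$ into a rational point on an algebraic curve of positive genus, and then to show that no such rational points exist. Following the standard approach, let $\Phi_4(x,c) \in \Z[x,c]$ denote the fourth dynatomic polynomial,
\[
\Phi_4(x,c) \;=\; \frac{(\varphi_c^4(x)-x)(\varphi_c(x)-x)}{\varphi_c^2(x)-x}.
\]
By Möbius inversion, this lies in $\Z[x,c]$, has degree $12$ in $x$, and its zeros in $\overline{\Q}$ are generically the exact period-$4$ points of $\varphi_c$. A rational $4$-cycle thus yields a rational point of the affine curve $C_4 = \{\Phi_4 = 0\} \subset \mathbb{A}^2$, and it suffices to show that $C_4(\Q)$ contains no such nondegenerate points.

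Direct analysis of $C_4$ is cumbersome because of the high degree, so my first step would be to reduce dimension using the $\Z/4\Z$-symmetry generated by $(x,c) \mapsto (\varphi_c(x),c)$. Passing to the symmetric functions $s = z_0 + z_2$ and $p = z_0 z_2$ of the pair $\{z_0, z_2\}$, and exploiting the constraint that $\{z_1, z_3\} = \{z_0^2-c, z_2^2-c\}$ must itself form an orbit under $\varphi_c^2$, one extracts a plane curve $X \subset \mathbb{A}^2_{(s,c)}$ (after eliminating $p$) of considerably lower degree. My second step is to prove that the smooth projective completion of $X$ has genus $g \geq 2$, via the genus-degree formula together with a careful analysis of the singularities coming from coincidences $z_0 = z_2$ or $z_1 = z_3$, which correspond to fixed points or genuine period-$2$ orbits rather than $4$-cycles.

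Once genus $\geq 2$ is established, Faltings' theorem guarantees $|X(\Q)| < \infty$, but one must actually enumerate all rational points. The next step is to quotient $X$ further by the involution swapping $\{z_0, z_2\}$ with $\{z_1, z_3\}$, which descends from the cyclic shift by one step. This yields an elliptic curve $E/\Q$, and every rational $4$-cycle produces a rational point of $E$ lying off the degenerate locus. The task reduces to computing $E(\Q)$ completely: carry out a $2$-descent to bound the Mordell-Weil rank, determine the torsion subgroup via reduction modulo small primes, and finally lift each rational point of $E$ back through the quotient maps to check that it corresponds only to a trivial solution.

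The main obstacle is the rigorous completion of the rational-point enumeration on $E$. If $E$ has Mordell-Weil rank $0$, the torsion computation finishes the proof, but if the rank is positive one must supplement the $2$-descent with a Chabauty-Coleman argument on $X$ itself (whose Jacobian has strictly smaller dimension than the rank would permit there), or with explicit bounds on the canonical height. One must also carefully track the correspondence between points of $E(\Q)$ and equivalence classes of rational $4$-cycles, since the quotient maps contract families of degenerate points (fixed and period-$2$ points, plus cusps at infinity) to finitely many exceptional points on $E$. It is this combination of descent, analytic methods, and bookkeeping of degeneracies that makes Morton's theorem substantially harder than the corresponding trivial statements for periods $1$ and $2$.
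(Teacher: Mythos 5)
The paper does not prove this statement; it is quoted verbatim from Morton's 1992 paper and used as a black box (notably in the proofs of Theorem~\ref{div by 16} and Theorem~\ref{thm 5 bis}), so there is no internal proof to compare yours against. Judged on its own terms, what you have written is a research plan in the spirit of Morton's actual argument (dynatomic curve, quotient by the cyclic shift, identification with a low-genus curve, determination of its rational points), but it is not a proof: no explicit equation for the quotient curve $X$ is derived, the genus is asserted to be ``$\geq 2$'' without the singularity analysis being carried out, the descent is not performed, and the decisive step is explicitly left open (``if the rank is positive one must supplement\dots''). Every hard point of Morton's theorem is deferred rather than resolved.

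There is also a concrete inaccuracy in the plan. The quotient of the period-$4$ dynatomic curve by the full cyclic action of order $4$ is not an elliptic curve: it is a curve of genus $2$, which Morton identifies birationally with the modular curve $X_1(16)$ (with an explicit hyperelliptic model), and whose six rational points all correspond to degenerate configurations. Elliptic curves enter only afterwards, as quotients of this genus-$2$ curve (equivalently, as the rank-$0$ factors of its Jacobian) used to enumerate its rational points. So your step ``this yields an elliptic curve $E/\Q$'' would fail as stated, and the subsequent Mordell--Weil computation would be aimed at the wrong object. To turn the sketch into a proof you would need the explicit genus-$2$ model, a proof that the relevant elliptic quotients have rank $0$, and the bookkeeping matching the finitely many rational points to fixed points, $2$-cycles and points at infinity --- precisely the content of Morton's paper.
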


\begin{theorem}[Flynn, Poonen and Schaefer]\label{thm2} For every $c\in \Q$, there is no periodic point of $\varphi _c$ in $\Q$ of period $5$.
\end{theorem}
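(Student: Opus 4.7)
The plan is to translate the problem into finding rational points on an explicit algebraic curve, exploit its symmetry to reduce to a curve of low genus, and then apply Chabauty's method. First I would introduce the fifth \emph{dynatomic polynomial} $\Phi_5(x,c) \in \Z[x,c]$, defined so that its roots in $x$ (for fixed $c$) are precisely the $x \in \overline{\Q}$ of exact period $5$ under $\varphi_c$, plus possibly some fixed points of $\varphi_c^5$ that are fixed points of $\varphi_c$ itself (these are easy to remove). Explicitly, $\Phi_5(x,c) = (\varphi_c^5(x)-x)/(\varphi_c(x)-x)$. A rational periodic point of exact period $5$ for $\varphi_c$ corresponds to a rational point on the affine plane curve
\[
Y_1(5) \colon \Phi_5(x,c) = 0,
\]
whose smooth projective model has genus $14$. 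Ruling out rational period-$5$ points thus amounts to showing that $Y_1(5)(\Q)$ contains only the ``spurious'' points (for example those at infinity or those coming from period dividing $5$ in characteristic zero, of which there are none since $5$ is prime).

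Next I would exploit the natural $\Z/5\Z$-action on $Y_1(5)$ given by $(x,c) \mapsto (\varphi_c(x),c)$, which cycles each orbit. The quotient curve
\[
Y_0(5) = Y_1(5)/(\Z/5\Z)
\]
parametrizes unordered $5$-cycles and has genus $2$. Correspondingly, the Jacobian of $Y_1(5)$ is isogenous to $\mathrm{Jac}(Y_0(5)) \times A$ for some abelian variety $A$ of dimension $12$, so all the action concentrates in the genus-$2$ piece: a rational period-$5$ orbit for $\varphi_c$ descends to a $\Q$-point of $Y_0(5)$. It therefore suffices to determine $Y_0(5)(\Q)$ and check that every rational point is spurious (e.g.\ a cusp or a Weierstrass point accounting for the fixed/degenerate locus, not an honest $5$-cycle).

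To find $Y_0(5)(\Q)$, I would apply the Chabauty--Coleman method to the genus-$2$ curve $Y_0(5)$. This requires two inputs: an explicit model (a hyperelliptic equation obtained by symmetrizing $\Phi_5$), and an upper bound on the Mordell--Weil rank of $\mathrm{Jac}(Y_0(5))(\Q)$. I would perform a $2$-descent on $\mathrm{Jac}(Y_0(5))$ over $\Q$, and hope to establish that the rank $r$ satisfies $r \le 1 < g = 2$, which is the hypothesis Chabauty's theorem needs. Assuming this succeeds, I would then choose a prime $p$ of good reduction, compute the annihilating $p$-adic abelian logarithm (equivalently, the Coleman integral against a suitable regular $1$-form vanishing on the Mordell--Weil group), and bound $\#Y_0(5)(\Q_p)_{\mathrm{Chab}}$ via the Newton polygon of the resulting $p$-adic power series. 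Combining this bound with a list of known rational points (produced by a small search) forces the two sets to coincide, identifying $Y_0(5)(\Q)$ completely.

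The main obstacle is the Mordell--Weil step: showing that $\mathrm{Jac}(Y_0(5))(\Q)$ has rank at most $1$. A naive $2$-descent on a genus-$2$ Jacobian requires class group and unit group computations in a number field of degree equal to the degree of a factor of the hyperelliptic polynomial, and can fail to be sharp if the Shafarevich--Tate group contains $2$-torsion that the descent cannot rule out; a second descent, or a Brauer--Manin obstruction argument, may be needed. Once the rank bound is in hand the Chabauty computation is relatively mechanical, and the final bookkeeping --- matching each rational point of $Y_0(5)$ with a fixed point, cusp, or degenerate configuration rather than a genuine $\varphi_c$-orbit of length $5$ --- is a finite and explicit verification.
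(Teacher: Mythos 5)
The paper does not prove this statement at all: it is imported verbatim as a theorem of Flynn, Poonen and Schaefer, with the reference \cite{FPS}, and is then used as a black box in the proof of Theorem~\ref{thm 5 bis}. So the relevant comparison is with the original paper \cite{FPS}, and there your outline is essentially \emph{their} strategy: pass to the dynatomic curve $\Phi_5(x,c)=0$ (genus $14$), quotient by the order-$5$ cyclic action $(x,c)\mapsto(\varphi_c(x),c)$ to get a genus-$2$ hyperelliptic curve, bound the Mordell--Weil rank of its Jacobian by a $2$-descent, and finish with Chabauty--Coleman plus a check that every rational point of the quotient is degenerate. That is exactly how the result was established (the descent yields rank exactly $1$, and the Chabauty step succeeds).

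However, as written your text is a research plan rather than a proof: the decisive steps are stated conditionally (``hope to establish that the rank $r$ satisfies $r\le 1$'', ``assuming this succeeds''), and none of the computations that carry the actual content --- the explicit hyperelliptic model of the quotient, the $2$-descent establishing the rank bound, the choice of prime and the Coleman integral, the enumeration and identification of the spurious points --- is performed. You correctly identify where the argument could fail (the descent might not be sharp because of $2$-torsion in the Shafarevich--Tate group), but you do not resolve it. Since the entire difficulty of the theorem lives in precisely those computations, the proposal cannot be accepted as a proof; it is an accurate roadmap to the proof in \cite{FPS}, which is where the missing work is done. Within the present paper, the appropriate treatment is simply the citation, as the authors have done.
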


No period higher than $5$ has been excluded so far for the rational maps $\varphi_c$. However, Stoll showed that the exclusion of period $6$ would follow from the validity of the Birch and Swinnerton-Dyer conjecture \cite{St}. 

Conjecture~\ref{conj FPS} is often studied using the \emph{height} and \emph{$p$-adic Julia sets}. Here we mainly use arithmetic and combinatorial means. Among our tools, we shall use the above two results and Theorem~\ref{thm3}, a particular case of a theorem of Zieve~\cite{Z} on polynomial iteration over the $p$-adic integers.

Given $0 \not=c \in \Q$, let $s$ denote the number of distinct primes dividing the denominator of $c$.  In \cite{CG}, Call and Goldstine showed that the number of rational preperiodic points of $\varphi_c$ does not exceed the upper bound $2^{s+2}+1$. Among our present results, we show that any rational cycle of $\varphi_c$ has length at most $2^s +2$. We also show that the conjecture holds for $\varphi_c$ in case $s \le 2$.

For convenience, in order  to make this paper as self-contained as possible, we provide short proofs of some already known basic results.

%%%%%%%%%%%%%%%%%%%%%%%%%%%%%%%%%%
\subsection{Notation}
Given $c \in \Q$, we denote by $\varphi_c \colon \Q \to \Q$ the quadratic map defined by $\varphi _c(x)=x^2-c$ for all $x \in \Q$. Most papers dealing with Conjecture~\ref{conj FPS} rather consider the map $x^2+c$. Our present choice allows statements with positive rather than negative values of $c$. For instance, with this choice, we show in \cite{EF1} that if $\varphi_c$ admits a cycle of length at least $2$, then $c\ge 1$.

The sets of rational periodic and preperiodic points of $\varphi _c$ will be denoted by $\per(\varphi _c)$  and $\pp(\varphi _c)$, respectively:
\begin{eqnarray*}
\per(\varphi _c) & = & \{x \in \Q \mid \varphi _c^n(x)=x  \,\, {\textrm{for some }} n \in \N \}, \\
\pp(\varphi _c) & = & \{x \in \Q \mid \varphi _c^m(x) \in \, \per(\varphi _c) \,\, {\textrm{for some }} m \in \N\}.
\end{eqnarray*}

\smallskip
For a nonzero integer $d$, we shall denote by $\supp(d)$ the set of prime numbers $p$ dividing $d$. For instance, supp$(45)=\{3,5\}$. If $x \in \Q$ and $p$ is a prime number, the \emph{$p$-adic valuation} $v_p(x)$ of $x$ is the unique $r \in \Z \cup \{\infty\}$ such that $x=p^r {x_1}/{x_2}$ with $x_1,x_2 \notin p\Z$ coprime integers. For $z \in \Q$, its \emph{numerator} and \emph{denominator} will be denoted by $\num(z)$ and $\den(z)$, respectively. They are the unique coprime integers such that $\den(z) \ge 1$ and $z=\num(z)/\den(z)$.

As usual, the cardinality of a finite set $E$ will be denoted by $|E|$.

%%%%%%%%%%%%%%%%%%%%%%%%%%%%%%%%%%
\section{Basic results over $\Q$}  
 \subsection{Constraints on denominators}
The aim of this section is to show that \emph{if $\varphi_c$ has a periodic point of period at least $3$, then $\den(c)$ is divisible by $16$}. The result below first appeared in \cite{WR}.

\begin{prop}\label{walde russo} Let $c \in \Q$. If $\per(\varphi _c) \not= \emptyset$, then $\den(c)=d^2$ for some $d \in \N$, and $\den(x)=d$ for all $x \in \pp(\varphi _c)$.
\end{prop}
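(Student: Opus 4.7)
The plan is to track the $p$-adic valuation along the orbit. Fix a prime $p$; the basic identity for $y = \varphi_c(x) = x^2 - c$ is that $v_p(y) = \min(2v_p(x), v_p(c))$ whenever $2v_p(x) \neq v_p(c)$, and $v_p(y) \geq v_p(c)$ otherwise. All the constraints come from this single fact, combined with the observation that on a periodic orbit the multiset of $p$-adic valuations is preserved, so in particular cannot drift to $-\infty$.

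First, for a cycle $\mathcal{C}$ of $\varphi_c$, I would establish the following dichotomy for each prime $p$. If $p \nmid \den(c)$, i.e. $v_p(c) \geq 0$, then every $z \in \mathcal{C}$ must satisfy $v_p(z) \geq 0$, since $v_p(z) < 0$ would force $v_p(\varphi_c(z)) = 2v_p(z) < v_p(z)$ and iteration would drag the valuation to $-\infty$. If $v_p(c) = -e < 0$, a case analysis on the sign of $v_p(z)$ rules out both $v_p(z) \geq 0$ (one step would produce valuation $-e$, triggering the same strict decrease) and $v_p(z) < 0$ with $2v_p(z) \neq -e$, leaving only the balanced case $2v_p(z) = v_p(c)$, i.e. $v_p(z) = -e/2$ for all $z \in \mathcal{C}$. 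In particular $e$ must be even.

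Setting $d = \prod_{p \mid \den(c)} p^{-v_p(c)/2}$ then yields $\den(c) = d^2$ and $\den(z) = d$ for every $z \in \mathcal{C}$, proving the periodic part. For a general preperiodic $x$, I would argue by reverse induction on the number of iterations needed to reach a periodic point; it suffices to show that $\den(\varphi_c(y)) = d$ implies $\den(y) = d$. For each prime $p$, if $p \nmid d$ then $v_p(y) < 0$ would force $v_p(\varphi_c(y)) = 2v_p(y) < 0$, a contradiction; and if $p \mid d$ then comparing the prescribed value $v_p(\varphi_c(y)) = -v_p(d)$ against the three subcases $2v_p(y) < v_p(c)$, $2v_p(y) > v_p(c)$, $2v_p(y) = v_p(c)$ pins down $v_p(y) = -v_p(d)$ as the only possibility.

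The main obstacle is really just the bookkeeping in the three-case analysis: one has to isolate cleanly the balanced case $2v_p(x) = v_p(c)$, which is simultaneously the source of the square-denominator phenomenon and the only mechanism by which the orbit can avoid being dragged off to $p$-adic infinity. Once that dichotomy is made explicit, both the forward argument for cycles and the backward step for preperiodic points reduce to short and essentially symmetric computations.
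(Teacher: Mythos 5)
Your proof is correct and follows essentially the same route as the paper's: both rest on the ultrametric dichotomy showing that any orbit point with $2v_p(x) \neq v_p(c)$ has $p$-adic valuation driven to $-\infty$ under iteration, which forces the balanced case $v_p(x) = v_p(c)/2$ for preperiodic points and hence that $v_p(c)$ is even. The only difference is organizational — the paper's single claim handles periodic and preperiodic points in one stroke, whereas you treat the preperiodic points by a separate backward induction — but the underlying valuation computation is the same.
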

\begin{proof} Let $p$ be a prime dividing $\den(c)$, \emph{i.e.} such that $v_p(c)<0$. Let $x \in \Q$. 

\smallskip
\noindent
\textbf{Claim.} \emph{If $v_p(x) \not=v_p(c)/2$, then the orbit of $x$ under $\varphi_c$ is infinite}. 

\smallskip
Indeed, consider the following two cases.
\begin{enumerate}
\item If $v_p(x)<v_p(c)/2$,  then $v_p(\varphi_c(x))=v_p(x^2-c)=2v_p(x)$. Thus $v_p(\varphi_c(x)) < v_p(c) < v_p(c)/2$ since $v_p(c) <0$. It follows that $v_p(\varphi_c^n(x))=2^n v_p(x)$ for all $n \ge 1$. \smallskip
\item If $v_p(x)>v_p(c)/2$, then $v_p(\varphi_c(x))=v_p(x^2-c)=v_p(c) < v_p(c)/2$ and we are back in the preceding case. In particular, we have $v_p(\varphi_c^n(x))=2^{n-1} v_p(c)$ for all $n \ge 1$.
\end{enumerate}
In both cases, the $p$-adic valuation of $\varphi_c^n(x)$ tends to $-\infty $ for $n \to \infty$, whence the claim.

\smallskip
If now $x \in \pp(\varphi_c)$, then the claim implies $v_p(x)=v_p(c)/2$. Note that such points $x$ exist by hypothesis on $\varphi_c$. Hence $v_p(c)$ is even, and since this occurs for all primes $p$ dividing $\den(c)$, it follows that $\den(c)=d^2$ for some $d \in \N$, and that $\den(x)=d$.
\end{proof}

\medskip
Consequently, since we are only interested in rational cycles of $\varphi_c$ here, we shall only consider those $c \in \Q$ such that $\den(c) = d^2$ for some $d \in \N$. Moreover, we shall frequently consider the set $\num(\per(\varphi_c))$ of numerators of rational periodic points of $\varphi_c$. 

\begin{corollary} Let $c \in \Q$. Assume $\per(\varphi _c) \not= \emptyset$. Let $d \in \N$ such that $\den(c)=d^2$. Then
 $$
 \num(\per(\varphi_c))=d\cdot \per(\varphi_c) \text{, }  \num(\pp(\varphi_c)) = d\cdot \pp(\varphi_c).
 $$ 
\end{corollary}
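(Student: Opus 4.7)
The plan is to obtain both equalities as an immediate rewriting of Proposition~\ref{walde russo}. Since $\per(\varphi_c)\subseteq \pp(\varphi_c)$, the hypothesis $\per(\varphi_c)\neq\emptyset$ lets me apply the proposition to conclude that $\den(c)=d^2$ and, crucially, that $\den(x)=d$ for every $x\in\pp(\varphi_c)$. (The fact that the $d$ appearing for $\den(x)$ is the same $d$ with $\den(c)=d^2$ is built into the statement of the proposition.)

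With this in hand, I would argue as follows. Fix any $x\in\pp(\varphi_c)$. By definition of numerator and denominator, $x=\num(x)/\den(x)$ with $\gcd(\num(x),\den(x))=1$ and $\den(x)\ge 1$. Since $\den(x)=d$, this gives $\num(x)=d\cdot x$. Taking the image of the map $x\mapsto d\cdot x$ over all of $\pp(\varphi_c)$ yields
\[
\num(\pp(\varphi_c))\;=\;\{\,d\cdot x\;:\; x\in\pp(\varphi_c)\,\}\;=\;d\cdot\pp(\varphi_c).
\]
Restricting this same identity to the subset $\per(\varphi_c)\subseteq \pp(\varphi_c)$ (whose elements also have denominator $d$) gives the analogous statement $\num(\per(\varphi_c))=d\cdot\per(\varphi_c)$.

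There is essentially no obstacle: the corollary is a reformulation of Proposition~\ref{walde russo} once one notices that uniform denominator $d$ across $\pp(\varphi_c)$ turns ``take the numerator'' into the linear operation ``multiply by $d$''. The only point that requires a line of justification is that the representation $x=\num(x)/\den(x)$ is indeed in lowest terms with positive denominator, which is the convention fixed in the Notation subsection, so $\den(x)=d$ forces $\num(x)=dx$ without any ambiguity of sign or common factor.
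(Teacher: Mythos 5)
Your proposal is correct and matches the paper's argument, which likewise derives both equalities directly from the fact that $\den(x)=d$ for every $x\in\pp(\varphi_c)$ given by Proposition~\ref{walde russo}. Your write-up simply spells out the one-line observation that $\den(x)=d$ turns $\num(x)$ into $d\cdot x$.
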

\begin{proof} Directly follows from the equality $\den(\pp(\varphi_c))=\{d\}$ given by Proposition~\ref{walde russo}.
\end{proof}

%%%%%%%%%%%%%%%%%%%%%%%%%%%%%%%%%%
\subsection{Basic remarks on periodic points}
In this section, we consider periodic points of any map $f \colon A \to A$ where $A$ is a domain.

\begin{lemma}\label{basic} Let $A$ be a commutative unitary ring and $f \colon A \to A$ a self map. Let $z_1 \in A$ be a periodic point of $f$ of period $n$, and let $\{z_1,\dots,z_n\}$ be the orbit of $z_1$. Then
\begin{equation*}
\prod_{1\leq i<j\leq n}(f(z_i)-f(z_j))=(-1)^{n-1}\prod_{1\leq i<j\le n}(z_{i}-z_{j}).
\end{equation*}
\end{lemma}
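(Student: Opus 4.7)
The plan is to reduce the claim to the antisymmetry of the Vandermonde product under a cyclic permutation. After possibly relabeling the orbit, we may assume $f(z_i) = z_{i+1}$ for $1 \le i \le n-1$ and $f(z_n) = z_1$; this labeling is available because $z_1$ has period exactly $n$, so the iterates $z_1, f(z_1), \ldots, f^{n-1}(z_1)$ are pairwise distinct and exhaust the orbit. With this labeling, $f$ acts on the indices by the $n$-cycle $\sigma = (1\ 2\ \cdots\ n) \in S_n$, i.e., $f(z_i) = z_{\sigma(i)}$.

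Next, I would rewrite the left-hand side as $\prod_{1 \le i < j \le n}(z_{\sigma(i)} - z_{\sigma(j)})$ and invoke the standard Vandermonde antisymmetry argument, which is valid in any commutative ring: as $(i,j)$ ranges over pairs with $i < j$, the unordered pairs $\{\sigma(i), \sigma(j)\}$ range bijectively over the same set; each factor $z_{\sigma(i)} - z_{\sigma(j)}$ contributes a sign $-1$ exactly when $(i,j)$ is an inversion of $\sigma$. Hence
\begin{equation*}
\prod_{1 \le i < j \le n}(z_{\sigma(i)} - z_{\sigma(j)}) \; = \; \operatorname{sgn}(\sigma) \prod_{1 \le i < j \le n}(z_i - z_j).
\end{equation*}
Since an $n$-cycle decomposes into $n-1$ transpositions, $\operatorname{sgn}(\sigma) = (-1)^{n-1}$, and the identity follows.

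Two minor sanity checks remain. First, the statement does not fix an ordering of the orbit, but any reordering conjugates $\sigma$ by a permutation of $\{1,\ldots,n\}$, and conjugation preserves sign, so the factor $(-1)^{n-1}$ is intrinsic. Second, one should note that both sides of the identity are polynomial expressions in the $z_i$'s with integer coefficients, so the argument is genuinely ring-theoretic and requires no division. I do not foresee a real obstacle here: the lemma is essentially the observation that the restriction of $f$ to the orbit is an $n$-cycle, combined with the elementary fact that an $n$-cycle has sign $(-1)^{n-1}$.
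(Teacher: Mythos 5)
Your proposal is correct and is essentially the paper's argument: the paper also relabels so that $f(z_i)=z_{i+1}$ cyclically and then splits the product into the factors with $j<n$ (unchanged) and the $n-1$ factors involving $z_n$ (each flipping sign), which is exactly your inversion count for the $n$-cycle $\sigma=(1\,2\,\cdots\,n)$ of sign $(-1)^{n-1}$. Your phrasing via $\operatorname{sgn}(\sigma)$ is marginally more abstract but amounts to the same computation.
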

\begin{proof} We have $f(z_i)=z_{i+1}$ for all $1 \le i <n$ and $f(z_n)=z_1$. Hence
\begin{align*}
\prod_{1\leq i<j\leq n}(f(z_i)-f(z_j)) &= \prod_{1\leq i<j< n}(z_{i+1}-z_{j+1})\prod_{1\leq i< n}(z_{i+1}-z_{1}) \\
&= (-1)^{n-1}\prod_{1\leq i<j\le n}(z_{i}-z_{j}).
\end{align*}
\end{proof}

\begin{proposition}\label{prp1} Let $A$ be a domain and $f \colon A \to A$ a map of the form $f(x)=x^2-c$ for some $c \in A$. Assume that $f$ admits a cycle~in~$A$. 

\smallskip
$(i)$ Let $x,y \in A$ be two distinct periodic points of $f$, of period $m$ and $n$, respectively. Let $r= \emph{lcm}(m,n)$. Then
$\displaystyle \prod_{i=0}^{r-1}(f^i(x)+f^i(y)) \,=\, 1.$

\smallskip
$(ii)$ Assume $\per(f)=\{x_1, x_2, \ldots, x_N\}$. Then 
$\displaystyle \prod_{1\leq i<j\leq N}(x_i+x_j) \,=\, \pm 1.$
\end{proposition}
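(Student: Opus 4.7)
The plan is to exploit the single algebraic identity $f(a) - f(b) = a^2 - b^2 = (a-b)(a+b)$, valid in the commutative ring $A$ for all $a,b \in A$; this is exactly the content of the shape $f(x) = x^2 - c$. Both parts of the proposition follow formally from this factorisation, and every cancellation I perform along the way is legitimate because $A$ is a domain.

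For part $(i)$, I would telescope the identity along the two orbits. Writing $f(a)-f(b) = (a-b)(a+b)$ with $a = f^{i}(x)$, $b = f^{i}(y)$ and multiplying these equalities for $i = 0, 1, \ldots, r-1$ yields
\[
f^{r}(x) - f^{r}(y) \;=\; (x-y)\prod_{i=0}^{r-1}\bigl(f^{i}(x) + f^{i}(y)\bigr).
\]
Since $r = \mathrm{lcm}(m,n)$ is a common multiple of both periods, $f^{r}(x) = x$ and $f^{r}(y) = y$, so the left-hand side equals $x-y$. The hypothesis $x \ne y$ together with the integrality of $A$ let us cancel the factor $x-y$, leaving the product equal to $1$, as desired.

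For part $(ii)$, I would promote this to a global Vandermonde computation. Each orbit of $f$ is $f$-invariant with $f$ acting on it as a cyclic permutation, so the restriction of $f$ to $\per(f)$ is a bijection; let $\sigma \in S_N$ be the corresponding permutation of the indices, defined by $f(x_i) = x_{\sigma(i)}$. Applying the factorisation pair by pair gives
\[
\prod_{1 \le i < j \le N}(x_i - x_j)(x_i + x_j) \;=\; \prod_{i<j}\bigl(f(x_i)-f(x_j)\bigr) \;=\; \prod_{i<j}(x_{\sigma(i)} - x_{\sigma(j)}) \;=\; \mathrm{sgn}(\sigma)\prod_{i<j}(x_i - x_j).
\]
As the $x_i$ are pairwise distinct, the Vandermonde factor $\prod_{i<j}(x_i - x_j)$ is a nonzero element of the domain $A$ and can be cancelled, yielding $\prod_{i<j}(x_i+x_j) = \mathrm{sgn}(\sigma) \in \{\pm 1\}$.

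There is no substantive obstacle here. The two cancellations above rest entirely on $A$ being a domain and on the periodic points being distinct. I note that Lemma~\ref{basic} is exactly the single-orbit instance of this Vandermonde calculation, and one could alternatively deduce $(ii)$ by combining Lemma~\ref{basic} applied within each orbit with part $(i)$ applied to every pair of distinct orbits.
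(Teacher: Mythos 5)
Your proof is correct and follows essentially the same route as the paper: both rest on the factorisation $f(u)-f(v)=(u-v)(u+v)$, using periodicity under $f^r$ for $(i)$ and the fact that $f$ permutes $\per(f)$ (a signed Vandermonde comparison) for $(ii)$, with cancellation justified by $A$ being a domain. The only cosmetic difference is that you telescope in $(i)$ and cancel the single factor $x-y$, whereas the paper cancels the whole product $\prod_i\bigl(f^i(x)-f^i(y)\bigr)$.
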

\begin{proof} First observe that for all $u,v \in A$, we have
\begin{equation}\label{der phi}
f(u)-f(v) = (u-v)(u+v).
\end{equation}

\smallskip
$(i)$ Since $f^r(x)=x$ and $f^r(y)=y$, we have
\begin{equation}\label{0 to r-1}
\prod_{i=0}^{r-1}\big(f^{i+1}(x)-f^{i+1}(y)\big) = \prod_{i=0}^{r-1}
\big(f^{i}(x)-f^{i}(y)\big).
\end{equation}
Now, it follows from \eqref{der phi} that
$$
f^{i+1}(x)-f^{i+1}(y) = (f^{i}(x)-f^{i}(y))(f^{i}(x)+f^{i}(y)).
$$
Since the right-hand side of \eqref{0 to r-1} is nonzero, the formula in $(i)$ follows.

\smallskip
Moreover, since $f$ permutes $\per(f)$, we have
$$
\prod_{1\leq i<j\leq n}(f(x_i)-f(x_j)) = \, \pm \prod_{1\leq i<j\leq n} (x_i- x_j).
$$
Using \eqref{der phi}, and since the above terms are nonzero, the formula in $(ii)$ follows.
\end{proof}

%%%%%%%%%%%%%%%%%%%%%%%%%%%%%%%%%%
\subsection{Sums of periodic points}
Here are straightforward consequences of Proposition~\ref{prp1} for $\varphi_c$. The result below originally appeared in \cite{EF}. 

\begin{proposition}\label{egalités1,2} Let $c \in \Q$. Assume $\per(\varphi _c)=\{x_1, x_2, \ldots, x_n\}$ with $n \ge 1$. Let $d = \den(x_1)$ and \emph{$X_i=\num(x_i)$} for all $1 \le i \le n$. Then, for all $1 \le k \le n-1$, we have
\begin{eqnarray}
\prod_{1\leq i \leq n} \big(X_i+X_{i+k}\big) & = & d^n \ \text{(with indices read mod $n$)}, \label{egalite1} \\
\prod_{1\leq i<j\leq n} \big(X_i+X_j\big) & =  & \pm d^{n(n-1)/2}. \label{egalite2}
\end{eqnarray}
\end{proposition}

\begin{proof} By Proposition~\ref{walde russo}, we have $\den(x_i)=d$ for all $i$. Now chase the denominator in the formulas of Proposition~\ref{prp1}.
\end{proof}

These other consequences will play a crucial role in the sequel.

\begin{corollary}\label{lm1} Let $c \in \Q$. Let $x,y$ be two distinct points in $\per{(\varphi_c)}$. Set $X=\num(x), Y=\num(y)$ and $d = \den(x)$.
Then
\begin{enumerate}
\item[(i)] $\supp(X+Y) \subseteq \supp(d)$. That is, any prime $p$ dividing $X+Y$ also divides $d$.
\item[(ii)] $X$ and $Y$ are coprime.
\item[(iii)] If no odd prime factor of $d$ divides $X+Y$, then $X+Y= \pm 2^t$ for some $t \in \N$.	
\end{enumerate}
\end{corollary}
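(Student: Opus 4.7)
The plan is to deduce all three assertions from the product identity \eqref{egalite2} of the preceding Proposition. First, I would write $\per(\varphi_c) = \{x_1, \ldots, x_n\}$, where $n \ge 2$ since $x \ne y$; say $x = x_1$ and $y = x_2$, and set $X_i = \num(x_i)$. By Proposition~\ref{walde russo} we have $\den(x_i) = d$ for every $i$, so \eqref{egalite2} reads
\[
\prod_{1 \le i < j \le n} (X_i + X_j) \;=\; \pm\, d^{n(n-1)/2}.
\]

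For (i), I would observe that any prime $p$ dividing $X + Y = X_1 + X_2$ divides the entire left-hand side, and therefore divides $\pm d^{n(n-1)/2}$, which forces $p \mid d$. This yields $\supp(X+Y) \subseteq \supp(d)$, and simultaneously shows $X + Y \ne 0$ since the right-hand side is a nonzero integer. For (ii), if some prime $p$ divided both $X$ and $Y$ then $p \mid X+Y$, so by (i) we would have $p \mid d$, contradicting $\gcd(X, d) = 1$, which holds by definition of $\num$ and $\den$. For (iii), part (i) tells us that every prime factor of the nonzero integer $X + Y$ lies in $\supp(d)$; the extra hypothesis rules out every odd prime in $\supp(d)$, leaving only $p = 2$. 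Hence $X + Y = \pm 2^t$ with $t = v_2(|X+Y|) \in \N$.

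There is no real technical obstacle; the corollary is essentially a direct reading of \eqref{egalite2}. The only mildly delicate point is to ensure $X + Y \ne 0$ in (iii) so that an expression of the form $\pm 2^t$ with $t \in \N$ is actually achievable, but this comes for free from the nonvanishing of the right-hand side of \eqref{egalite2}.
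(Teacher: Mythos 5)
Your proposal is correct and follows essentially the same route as the paper: point (i) is read off from equality~\eqref{egalite2}, point (ii) follows from (i) together with $\gcd(X,d)=1$, and point (iii) combines (i) with the hypothesis to force $\supp(X+Y)\subseteq\{2\}$. The only difference is your explicit remark that $X+Y\neq 0$, which the paper leaves implicit; this is a harmless (and welcome) extra precision.
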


\begin{proof} The first point directly follows from equality~\eqref{egalite2}. For the second one, if a prime $p$ divides $X$ and $Y$, then it divides $d$ by the first point, a contradiction since $X,d$ are coprime. The last point follows from the first one and the hypothesis on the odd factors of $d$, which together imply $\supp(X+Y) \subseteq \{2\}$.
\end{proof}

\begin{example}
Consider the case $c=29/16$ of Example~\ref{ex length 3}, where $d=4$ and $\varphi_c$ admits the cycle $\C= (-1/4, -7/4,5/4)$. Here $\num(\C)=(-1,-7,5)$, with pairwise sums $-8, -2, 4$, respectively. This illustrates all three statements of Corollary~\ref{lm1}. Viewing $\C$ as a set, we have $\C \subseteq \per(\varphi_c)$. We claim $\C = \per(\varphi_c)$. For otherwise, let $x=X/4$ be yet another periodic point of $\varphi_c$. Then $X-1,X-7,X+5$ would also be powers of $2$ up to sign. The only possibility is $X=3$ as easily seen. But $3/4$ is only a \emph{preperiodic} point, since under $\varphi_c$ we have
$
3/4 \mapsto -5/4 \mapsto -1/4 \mapsto -7/4 \mapsto 5/4 \mapsto -1/4.
$
\end{example}

%%%%%%%%%%%%%%%%%%%%%%%%%%%%%%%%%%
\subsection{Divisibility properties of $\den(c)$}
Our bounds on cycle lengths of $\varphi_c$ involve the denominator of $c$. The following proposition and corollary already appear in \cite{WR}.

\begin{prop}\label{d pair} Let $c \in \Q$. If $\den(c)$ is odd, then $|\per(\varphi_c)| \le 2$.
\end{prop}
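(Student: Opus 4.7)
The plan is to assume for contradiction that $|\per(\varphi_c)| \ge 3$ and derive a parity obstruction from Corollary~\ref{lm1}(i).

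First, I would invoke Proposition~\ref{walde russo}: since $\per(\varphi_c) \neq \emptyset$ (otherwise the statement is trivial), there exists $d \in \N$ with $\den(c) = d^2$, and every periodic point of $\varphi_c$ has denominator exactly $d$. The hypothesis that $\den(c)$ is odd then forces $d$ to be odd, so in particular $2 \notin \supp(d)$.

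Next, suppose $x_1, x_2, x_3 \in \per(\varphi_c)$ are three distinct periodic points, with numerators $X_1, X_2, X_3$. For any $i \neq j$, I would first check that $X_i + X_j \neq 0$: this is guaranteed by equation~\eqref{egalite2}, whose product over all pairs in $\per(\varphi_c)$ equals $\pm d^{n(n-1)/2} \neq 0$, so no individual factor can vanish. Thus $\supp(X_i + X_j)$ is well defined, and Corollary~\ref{lm1}(i) gives $\supp(X_i + X_j) \subseteq \supp(d)$. Since $2 \notin \supp(d)$, each pairwise sum $X_i + X_j$ must be odd.

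The contradiction is then immediate: three integers $X_1, X_2, X_3$ with all three pairwise sums odd would require them to be pairwise of opposite parities, which is impossible among two parity classes. Hence $|\per(\varphi_c)| \le 2$. I do not anticipate any obstacle here; the argument is essentially a one-line parity pigeonhole on top of the structural results already established.
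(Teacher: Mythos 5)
Your proposal is correct and follows essentially the same route as the paper: the paper applies equality~\eqref{egalite2} directly to conclude that every pairwise sum $X_i+X_j$ is odd when $d$ is odd, and then uses the same parity pigeonhole to force $n\le 2$; your detour through Corollary~\ref{lm1}(i) is just a repackaging of that same equality. No gaps.
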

\begin{proof} We have $\den(c)=d^2$ for some $d \in \N$, and $\den(x)=d$ for all $x \in \pp(\varphi_c)$. Assume $\per(\varphi_c)=\{x_1,\dots,x_n\}$. Let $X_i=\num(x_i)$ for all $i$. Then by equality \eqref{egalite2} in Proposition~\ref{egalités1,2}, we have
$$\prod_{1\leq i<j\leq n} \big(X_i+X_j\big)  =  \pm d^{n(n-1)/2}.$$
Since $d$ is odd by assumption, each factor $X_i+X_{j}$ is odd as well, whence $X_i \not\equiv X_j \bmod 2$ for all $1 \le i < j \le n$. Of course, this is only possible if $n \le 2$.
 \end{proof}
 
 \begin{rem}
If $c \in \Z$, then $\den(c)=1$ and the above result implies that $\varphi_c$  admits at most two periodic points. 
 \end{rem}

\begin{cor}[\cite{WR}]\label{cor d pair} Let $c \in \Q$. If $\varphi _c$ admits a rational cycle of length at least $3$, then $\den(c)$ is even.
\end{cor}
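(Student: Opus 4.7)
The plan is very short: this corollary is simply the contrapositive of Proposition~\ref{d pair}. If $\varphi_c$ admits a rational cycle of length $n \ge 3$, then every element of that cycle lies in $\per(\varphi_c)$, so $|\per(\varphi_c)| \ge n \ge 3$. By Proposition~\ref{d pair}, whenever $\den(c)$ is odd we have $|\per(\varphi_c)| \le 2$. Therefore $\den(c)$ cannot be odd, i.e., $\den(c)$ is even.

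There is essentially no obstacle here: the real content was absorbed into Proposition~\ref{d pair}, whose proof used Proposition~\ref{walde russo} (all periodic points share the same denominator $d$ with $\den(c)=d^2$) together with identity~\eqref{egalite2} modulo~$2$ to force the numerators $X_i$ to have pairwise distinct parities, which caps their number at $2$. So I would present the corollary in one or two lines as an immediate consequence, with no further computation needed.
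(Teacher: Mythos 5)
Your proposal is correct and matches the paper's intent exactly: the corollary is stated with no separate proof precisely because it is the immediate contrapositive of Proposition~\ref{d pair}, since a cycle of length $n\ge 3$ forces $|\per(\varphi_c)|\ge 3$. Nothing further is needed.
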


%%%%%%%%%%%%%%%%%%%%%%%%%%%%%%%%%%
\subsection{Involving $p$-adic numbers}

We shall now improve Corollary~\ref{cor d pair} by showing that under the same hypotheses, $\den(c)$ must in fact be divisible by $16$. For that, we shall need Morton's Theorem~\ref{thm1} excluding period $4$, as well as a result below due to Zieve concerning periodic points of polynomials over the $p$-adic integers. 

As usual, $\Z_p$ and $\Q_p$ will denote the rings of $p$-adic integers and  numbers, respectively. A result in \cite{AF} contains a generalization of the above proposition. It 
says that any polynomial $g(x)=x^p+ \alpha$ with $ \alpha \in \Z_p$, either admits $p$  fixed points in $\Q_p$ or else a cycle of length exactly $p$ in $ \Q_p$. For $z \in \Q_p$, we denote by $v_p(z)$ the $p$-adic valuation of $z$. 

Here is a particular case of a theorem of Zieve~\cite{Z} that we shall use to improve Corollary~\ref{cor d pair}. See also~\cite[Theorem 2.21 p. 62]{S}.
\begin{theorem}\label{thm3} Let $p$ be a prime number and let $g$ be a polynomial in $\Z_p[t]$ of degree at least $2$.  Let $\alpha$ be a periodic point of $g$ in $\Z_p$   and let
\begin{eqnarray*}  
n & = & \textrm{the exact period of $\alpha$ in $\Z_p$}, \\
m & = & \textrm{the exact period  of $\alpha$ in $\Z/ p\Z$}, \\
r & = &  \left\{
  \begin{array}{ll}
  \textrm{the order of  } (g^m)'(\alpha) & \textrm{if } \, (g^m)'(\alpha) \,  \textrm{is invertible in } \Z/ p\Z, \\ & \\
  \infty & \textrm{if } \, (g^m)'(\alpha) \,  \textrm{is not invertible in } \Z/ p\Z.
  \end{array}
  \right.
\end{eqnarray*} 
Then $n \in \{m, mr, mrp^e\}$ for some integer $e \ge 1$ such that $p^{e-1}\le 2/(p-1).$
\end{theorem}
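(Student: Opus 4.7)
The plan is to reduce the problem to iterating $h = g^m$ near the fixed point $\bar\alpha \in \Z/p\Z$ and then analyse the iteration by a $p$-adic Taylor expansion. Since $\bar\alpha$ has exact period $m$ under $\bar g$, the divisibility $m \mid n$ holds; set $\ell = n/m$, so that $\alpha$ is an exact period-$\ell$ point of $h$ in $\Z_p$ while $\bar\alpha$ is a fixed point of $\bar h$. The goal is to show $\ell \in \{1,r,rp^e\}$ with $p^{e-1}\le 2/(p-1)$.

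For the first two cases, expand $h(\alpha + y) = h(\alpha) + \lambda y + \sum_{i\ge 2} a_i y^i$ in $\Z_p[y]$ with $\lambda = h'(\alpha)$, set $u_k = h^k(\alpha) - \alpha$, and let $s = v_p(u_1)$ when $u_1 \ne 0$. The recurrence $u_{k+1} = u_1 + \lambda u_k + \sum_{i\ge 2} a_i u_k^i$ then yields two clean subcases. If $\lambda \in p\Z_p$ (the case $r = \infty$), the $u_1$-term dominates: $v_p(\lambda u_k) > s$ and $v_p(u_k^i) \ge 2s > s$ for $i\ge 2$, forcing $v_p(u_k) = s$ for every $k \ge 1$, so $u_\ell = 0$ compels $u_1 = 0$ and hence $\ell = 1$. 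If instead $\lambda$ is a unit of order $r \ge 2$ modulo $p$, reducing the recurrence modulo $p^{2s}$ linearises it to $u_k \equiv \frac{\lambda^k - 1}{\lambda - 1}\, u_1 \pmod{p^{2s}}$; since $\lambda - 1$ is a unit, $u_\ell = 0$ forces $\lambda^\ell \equiv 1 \pmod p$, i.e., $r \mid \ell$. Replacing $h$ by $h^r$ (whose multiplier at $\alpha$ is $\lambda^r \equiv 1 \pmod p$) then reduces the remaining period $\ell' = \ell/r$ to the indifferent case.

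The main obstacle is this residual case $\lambda \equiv 1 \pmod p$. Writing $\lambda = 1 + p^b\mu$ with $\mu \in \Z_p^\times$ or $b = \infty$, the same recurrence now gives $u_k \equiv k u_1$ modulo higher-order corrections, so a genuine return $u_{\ell'} = 0$ requires $v_p(\ell')$ to absorb the gap between $v_p(u_1)$ and the accumulating quadratic contributions, forcing $\ell'$ to be a power of $p$. The explicit bound $p^{e-1}\le 2/(p-1)$ would be extracted by a Newton-polygon analysis of $h^{\ell}(x) - x$ at $\alpha$ (equivalently, by a Lubin-style analysis of indifferent $p$-adic periodic points): each iteration of $h^r$ can only raise the valuation of $u_k$ by a controlled amount coming from $b$ and from $v_p(a_2)$, and this tightens sharply as $p$ grows (for $p \ge 5$ one finds that this case is in fact empty, consistent with $p^{e-1} \le 2/(p-1) < 1$). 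The earlier cases reduce to direct $p$-adic valuation bookkeeping; this last step carries all the real weight, and bounding $e$ precisely is where I would need to invest most of the care.
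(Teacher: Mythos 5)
The paper does not actually prove Theorem~\ref{thm3}: it is imported as a known result, quoted as a special case of a theorem from Zieve's thesis (see also Silverman's book, Theorem~2.21), so there is no internal proof to measure yours against. Judged on its own terms, your proposal follows the standard route, and the first two stages are sound: the divisibility $m \mid n$; the elimination of the case $r=\infty$, where $v_p(u_k)=v_p(u_1)$ for all $k\ge 1$ (using $v_p(u_1)\ge 1$ and $v_p(\lambda)\ge 1$) forces $\ell=1$; and the linearisation modulo $p^{2s}$ giving $u_\ell \equiv \frac{\lambda^\ell-1}{\lambda-1}u_1$, hence $r \mid \ell$ when $\lambda$ is a unit of order $r\ge 2$ modulo $p$.

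There is, however, a genuine gap at exactly the point you flag yourself. The entire quantitative content of the theorem --- that the residual period $\ell'=\ell/r$ is a power $p^{e}$ of $p$ and that $p^{e-1}\le 2/(p-1)$ --- is asserted rather than proved. Saying that ``a Newton-polygon or Lubin-style analysis would extract the bound'' is not an argument: the case $\lambda\equiv 1 \pmod p$ is precisely where the linearisation $u_k\equiv k\,u_1$ breaks down, because the quadratic contributions $a_2u_k^2$ can have valuation comparable to that of $p\,u_1$, and one must track $v_p(u_1)$, $v_p(\lambda-1)$ and $v_p(a_2)$ simultaneously through the passage from $h^{r}$ to $h^{rp}$ to see why $e\le 2$ for $p=2$, $e\le 1$ for $p=3$, and why the case is empty for $p\ge 5$. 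That computation is the heart of Zieve's (and Pezda's) theorem and it is absent here. As written, your text establishes only $n\in\{m,\,mr,\,mr\ell'\}$ for some integer $\ell'>1$ with no control whatsoever on $\ell'$, which falls well short of the stated conclusion.
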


We may now sharpen Corollary~\ref{cor d pair}.

 \begin{theorem}\label{4|d} \label{div by 16} Let $c \in \Q$. If $\varphi _c$ admits a rational cycle of length $n \ge 3$, then $\den(c)$ is divisible by $16$. 
\end{theorem}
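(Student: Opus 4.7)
The plan is to argue by contradiction, combining Morton's Theorem~\ref{thm1} (which excludes period~$4$) with Zieve's Theorem~\ref{thm3}. Corollary~\ref{cor d pair} already forces $\den(c)$ to be even, so writing $\den(c)=d^2$ we have $v_2(d)\geq 1$, and the conclusion $16\mid\den(c)$ is equivalent to $v_2(d)\geq 2$. I would therefore assume for contradiction that $v_2(d)=1$, writing $d=2m$ with $m$ odd. Under this assumption, Proposition~\ref{walde russo} gives $v_2(x_i)=-1$ for every periodic point, so $x_i=X_i/(2m)$ with odd numerator $X_i$.

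The crucial step is to conjugate $\varphi_c$ over $\Q_2$ into a polynomial with $2$-adic integer coefficients so that Theorem~\ref{thm3} can be brought to bear. I would try the shift $u=x-1/2$, which sends $\varphi_c$ to
\[
g(u) \;=\; \varphi_c(u+1/2)-1/2 \;=\; u^2+u-b, \qquad b=c+1/4.
\]
For each periodic point, $u_i=(X_i-m)/(2m)$ is a $2$-adic integer because $X_i-m$ is even (both $X_i$ and $m$ being odd). Since the $u_i$ form a cycle of $g$ inside $\Z_2$, the relation $u_{i+1}=u_i^2+u_i-b$ gives $b=u_i^2+u_i-u_{i+1}\in\Z_2$, so $g\in\Z_2[t]$.

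Next I would apply Theorem~\ref{thm3} to the cycle $(u_1,\ldots,u_n)\subset\Z_2$ at $p=2$. Because $u(u+1)$ is always even, $g(u)\equiv b\pmod 2$ identically, so $g\bmod 2$ is constant and the period mod~$2$ of any periodic point equals $m=1$. Moreover $g'(u)=2u+1\equiv 1\pmod 2$, so the multiplier $(g^m)'(u_1)$ is a unit mod~$2$ of order $r=1$. The bound $2^{e-1}\leq 2/(p-1)=2$ permits $e\in\{1,2\}$, and Zieve's conclusion reduces to $n\in\{1,2,4\}$. Since $n\geq 3$ by hypothesis, necessarily $n=4$, directly contradicting Theorem~\ref{thm1}.

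The principal obstacle is spotting the right change of variables at the start. A naive rescaling $y=\alpha x$ turns $\varphi_c$ into $y^2/\alpha-\alpha c$, and no affine substitution can simultaneously force both coefficients into $\Z_2$ when $v_2(c)<0$. The shift by $1/2$ succeeds in the borderline case $v_2(d)=1$ precisely because the matching parities of $X_i$ and $m$ place $x_i-1/2$ in $\Z_2$; this in turn forces $c+1/4\in\Z_2$ whenever a cycle exists, and from that point onward the reduction mod~$2$ is essentially automatic because $u^2+u$ is always even.
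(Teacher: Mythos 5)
Your proof is correct and follows essentially the same route as the paper: assume $v_2(d)=1$, conjugate by the shift $t\mapsto t+1/2$ to get $f(t)=t^2+t-(c+1/4)\in\Z_2[t]$, and apply Zieve's theorem together with Morton's exclusion of period $4$. The only (minor, and arguably cleaner) difference is how you get the period $m=1$ mod $2$: you observe that $u^2+u$ vanishes identically on $\Z/2\Z$ so the reduced map is constant, whereas the paper deduces $v_2(z_i-z_j)\ge 1$ from the product identity of Lemma~\ref{basic}.
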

\begin{proof} By Propositions~\ref{walde russo} and~\ref{d pair}, we have $\den(c)=d^2$ for some even positive integer $d$. Assume for a contradiction that $d$ is not divisible by $4$. Hence $v_2(d)=1$ and $v_2(c)=-2$. Let $\C \subseteq \per(\varphi_c)$ be a rational cycle of $\varphi_c$ of length $n \ge 3$. For all $z \in \C$, we have $\den(z)=d$ and hence $v_2(z)=-1$ by Proposition~\ref{walde russo}. 

Recall that, if $z_1,z_2 \in \Q$ satisfy $v_2(z)=v_2(z')=r$ for some $r \in \Z$, then $v_2(z \pm z') \ge r+1$. 

In particular, for all $z \in \C$, we have $v_2(z-1/2) \ge 0$. Therefore the translate $\C-1/2$ of $\C$ may be viewed as a subset of the local ring $\Z_{(2)} \subset \Q$, and hence of the ring  $\Z_2$ of $2$-adic integers. That is, we have
$$
\C-1/2 \ \subset \ \Z_2.
$$
\textbf{Step 1}. In view of applying Theorem~\ref{thm3}, we seek a polynomial in $\Z_2[t]$ admitting $\C-1/2$ as a cycle. The polynomial
\begin{eqnarray*}
f(t) & = & \varphi_c(t+1/2)-1/2 \\
& = & t^2+t-(c+1/4)
\end{eqnarray*}
will do. Indeed, by construction we have
$$
f(t-1/2)=\varphi_c(t)-1/2.
$$
Since $\varphi_c(\C)=\C$, it follows that
$$
f(\C-1/2) = \C-1/2,
$$
as desired. For the constant coefficient of $f$, we claim that $v_2(c+1/4) \ge 0$. Indeed, let $x,y \in \C$ with $y = \varphi_c(x)$. Thus $f(x-1/2)=y-1/2$, i.e.
$$
(x-1/2)^2+(x-1/2)-(c+1/4) = y-1/2.
$$
Since $v_2(x-1/2), v_2(y-1/2) \ge 0$, it follows that $v_2(c+1/4) \ge 0$, as claimed. Therefore $f(t) \in \Z_2[t]$, as desired. 

For the next step, we set 
$$\C-1/2=(z_1,\dots,z_n)$$ 
with $f(z_i)=z_{i+1}$ for $i \le n-1$ and $f(z_n)=z_1$.

\medskip
\noindent
\textbf{Step 2}. We now apply Theorem \ref{thm3} to the polynomial $g=f$ and to its $n$-periodic point $\alpha=z_1$. We need to compute the corresponding numbers $m$ and $r$ in that theorem, where $m$ is the period of $z_1$ in $\Z/2\Z$. 

\smallskip
We claim that $m=1$. By Lemma~\ref{basic}, for the cycle $(z_1, z_2,\dots, z_n)$ of $f$, we have
 $$\prod_{1\le i <j \le n }^{n}\frac{f(z_i)-f(z_j)}{z_i-z_j} \,=\, \pm 1.
 $$
 Since $f(x)-f(y)=(x-y)(x+y+1)$ for all $x,y$, this yields 
 $$\prod_{1\le i <j \le n }^{n} (z_i+z_j +1) \,=\, \pm 1.
 $$
Therefore $v_2(z_i+z_j +1)=0$ for all $1\le i < j \le n$, which in turn implies $v_2(z_i-z_j) \ge 1$ for all $i<j$. Consequently, the cycle $(z_1, z_2,\dots, z_n)$ collapses to the cycle $(z_1)$ of length $1$ in $\Z/2\Z$. This settles the claim. 

\smallskip
Since $m=1$, we have $(f^m)'(t)=f'(t)=2t+1$ in $\Z_2[t]$, whence $f'(z_1)=1$ in $\Z/2\Z$. Therefore $r=1$ by definition. 

\smallskip
 By Theorem \ref{thm3}, it follows that $n \in \{1,2^e\}$ for some integer $e \ge 1$ such that $2^{e-1} \le 2/1$. Hence $e \le 2$ and so $n \in \{1,2,4\}$. Since $n \ge 3$ by assumption, it follows that $n=4$. But period 4 for $\varphi_c$ is excluded by Morton's Theorem~\ref{thm1}. This contradiction concludes the proof of the theorem.
 \end{proof}

\begin{rem} Theorem~\ref{div by 16} is best possible, as witnessed by Example~\ref{ex length 3} where period 3 occurs for $\varphi_c$ with $c=29/16$.
\end{rem}

%%%%%%%%%%%%%%%%%%%%%%%%%%%%%%%%%%
\section{An upper bound on $|\per(\varphi_c)|$}

Let $c \in \Q$. Throughout this section, we assume $\den(c) = d^2$ with $d \in 4\N$. Recall that this is satisfied whenever $\varphi_c$ admits a rational cycle $\C$ of length $n \ge 3$, as shown by Proposition~\ref{walde russo} and Theorem~\ref{4|d}.

Let $s=|\supp(d)|$. The following upper bound on $|\pp(\varphi _c)|$ was shown in \cite{CG}:
$$
|\pp(\varphi _c)|\leq 2^{s+2} +1.
$$
Our aim in this section is to obtain an analogous upper bound on $|\per(\varphi _c)|$, namely
$$
|\per(\varphi _c)|\leq 2^{s} +2.
$$

The proof will follow from a string of modular constraints on the numerators of periodic points of $\varphi _c$ developped in this section. 

%%%%%%%%%%%%%%%%%%%%%%%%%%%%%%%%%%
\subsection{Constraints on numerators}
We start with an easy observation. 

\begin{lemma}\label{num preper} Let $c = a/d^2 \in \Q$ with $a,d$ coprime integers. Let $x \in \pp(\varphi_c)$. Let $X=\num(x)$. Then $X^2 \equiv a \bmod d$.
\end{lemma}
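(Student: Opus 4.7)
The plan is to use Proposition~\ref{walde russo} twice: once for $x$ and once for its image $\varphi_c(x)$, which is itself preperiodic (indeed in $\pp(\varphi_c)$) since the preperiodic set is forward-invariant. The proposition guarantees that both rational numbers have denominator exactly $d$.

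Concretely, I would write $x = X/d$ with $\gcd(X,d)=1$, and then simply compute
\begin{equation*}
\varphi_c(x) \;=\; \frac{X^2}{d^2} - \frac{a}{d^2} \;=\; \frac{X^2 - a}{d^2}.
\end{equation*}
The right-hand side is a (possibly unreduced) representation of $\varphi_c(x)$, so after reducing we know the true denominator divides $d^2$. But $\varphi_c(x)\in\pp(\varphi_c)$, so by Proposition~\ref{walde russo} its true denominator is exactly $d$. Therefore the fraction $(X^2-a)/d^2$ must cancel at least a factor of $d$, which forces $d\mid X^2-a$, i.e.\ $X^2\equiv a\pmod d$.

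There is essentially no obstacle here; the only thing to double-check is that $\varphi_c(x)$ really lies in $\pp(\varphi_c)$, so that Proposition~\ref{walde russo} applies to it. This is immediate from the definition: if $\varphi_c^m(x)$ is periodic for some $m\ge 1$, then $\varphi_c^{m-1}(\varphi_c(x))$ is periodic (and for $m=1$ the element $\varphi_c(x)$ is periodic itself), so $\varphi_c(x)\in\pp(\varphi_c)$. Once this is noted, the congruence drops out of comparing denominators of the two sides of the identity $\varphi_c(x)=(X^2-a)/d^2$.
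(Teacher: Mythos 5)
Your proof is correct and follows essentially the same route as the paper: write $x=X/d$ via Proposition~\ref{walde russo}, observe that $\varphi_c(x)$ is again preperiodic with denominator exactly $d$, and conclude from $\varphi_c(x)=(X^2-a)/d^2$ that $d$ divides $X^2-a$. The only difference is cosmetic: the paper writes the numerator of $\varphi_c(x)$ as $(X^2-a)/d$ and notes it is an integer, whereas you argue via cancellation in the unreduced fraction.
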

\begin{proof} We have $x=X/d$ by Proposition~\ref{walde russo}. Let $z = \varphi_c(x)$. Then $z \in \pp(\varphi_c)$, whence $z = Z/d$ where $Z=\num(z)$. Now $z=x^2-c=(X^2-a)/d^2$, whence
\begin{equation}\label{x'}
Z=(X^2-a)/d.
\end{equation}
Since $Z$ is an integer, it follows that
$
X^2 \equiv a \bmod d.
$
\end{proof}

Here is a straightforward consequence.

\begin{proposition}\label{lm2} Let $ c \in \Q$ such that $\den(c)=d^2$ with $d \in 4\N$. Let $X,Y \in \num(\pp(\varphi_c))$. Let $p \in \supp(d)$ and $r = v_p(d)$ the $p$-valuation of $d$.
Then
$$
X \equiv \pm Y \bmod p^r.
$$
In particular, $\num(\pp(\varphi_c))$ reduces to at most two opposite classes mod~$p^r$.
\end{proposition}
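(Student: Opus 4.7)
The plan is to reduce the statement to a congruence of the form $X^2 \equiv Y^2 \pmod{p^r}$ and then extract the conclusion $X \equiv \pm Y \pmod{p^r}$ by separating the cases $p$ odd and $p = 2$. Writing $c = a/d^2$ with $\gcd(a,d)=1$, the starting point is Lemma~\ref{num preper}, which applied to both $x = X/d$ and $y = Y/d$ yields $X^2 \equiv a \equiv Y^2 \pmod{d}$. Since $p^r \mid d$, this gives $p^r \mid (X-Y)(X+Y)$. The ``in particular'' assertion will then be immediate: picking a base class $X_0 \in \num(\pp(\varphi_c))$, any other $Y$ must lie either in the class of $X_0$ or in the class of $-X_0$ modulo $p^r$.

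For odd $p$, the argument is short. Since $x = X/d$ and $y = Y/d$ are in lowest terms and $p \mid d$, both $X$ and $Y$ are coprime to $p$. If $p$ divided both $X-Y$ and $X+Y$, then it would divide $2X$, and being odd it would divide $X$---a contradiction. Hence exactly one of $X \pm Y$ is divisible by $p$, and that factor must absorb the full power $p^r$ in $(X-Y)(X+Y)$.

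The main obstacle is the case $p = 2$: since $X, Y$ are both odd, both $X \pm Y$ are automatically even, and the naive argument only yields $X \equiv \pm Y \pmod{2^{r-1}}$. The repair I propose is to extract one extra factor of $2$ by running a single iteration of $\varphi_c$. Set $Z_X = \num(\varphi_c(x)) = (X^2 - a)/d$ and $Z_Y = (Y^2 - a)/d$; by Proposition~\ref{walde russo}, both $Z_X$ and $Z_Y$ are coprime to $d$, hence odd, so $2 \mid Z_X - Z_Y = (X^2 - Y^2)/d$. This upgrades the congruence to $v_2(X^2 - Y^2) \ge r + 1$. Combining with the identity $(X+Y) - (X-Y) = 2Y$, whose $2$-adic valuation is exactly $1$, one deduces $\min(v_2(X-Y), v_2(X+Y)) = 1$, and therefore
\[
\max(v_2(X-Y), v_2(X+Y)) \;=\; v_2(X^2 - Y^2) - 1 \;\ge\; r,
\]
which is the desired conclusion.
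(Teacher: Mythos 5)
Your proposal is correct and follows essentially the same route as the paper's proof: the congruence $X^2 \equiv Y^2 \bmod d$ from Lemma~\ref{num preper}, the coprimality argument for odd $p$, and, for $p=2$, the same key trick of applying $\varphi_c$ once to upgrade to $v_2(X^2-Y^2)\ge r+1$ before using that $X\pm Y$ cannot both be divisible by $4$. The only difference is cosmetic (you phrase the last step via $2$-adic valuations rather than divisibility).
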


\begin{proof} It follows from Lemma~\ref{num preper} that
$X^2 \equiv Y^2 \bmod d.$
Hence
$$
(X+Y)(X-Y) \equiv 0 \bmod p^r.
$$

\noindent
\textbf{Case 1.} Assume $p$ is odd. Then $p$ cannot divide both $X+Y$ and $X-Y$, for otherwise it would divide $X$ which is impossible since $X$ is coprime to $d$. Therefore $p^r$ divides $X+Y$ or $X-Y$, as desired.

\medskip
\noindent
\textbf{Case 2.} Assume $p=2$. Then $r \ge 2$ by hypothesis.  Let $x'=\varphi_c(x)=X'/d$ and $y'=\varphi_c(y)=Y'/d$. Then $X',Y'$ are odd since coprime to $d$. By \eqref{x'}, we have $X'=(X^2-a)/d$ and $Y'=(Y^2-a)/d$. Hence 
$$
X'-Y'=(X^2-Y^2)/d.
$$
Since $2^r$ divides $d$ and since $X'-Y'$ is even, it follows that
$$
(X+Y)(X-Y) \equiv 0 \bmod 2^{r+1}.
$$
Now $4$ cannot divide both $X+Y$ and $X-Y$ since $X,Y$ are odd. Therefore
$X+Y \equiv 0 \bmod 2^r$ or $X-Y \equiv 0 \bmod 2^r$, as desired.
\end{proof}

\begin{corollary}\label{preper mod d} Let $ c \in \Q$ such that $\den(c)=d^2$ with $d \in 4\N$. Let $s = |\supp(d)|$. Then $\num(\pp(\varphi_c))$ reduces to at most $2^s$ classes mod~$d$.
\end{corollary}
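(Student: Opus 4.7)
The plan is to package the prime-power constraint from Proposition~\ref{lm2} via the Chinese Remainder Theorem. Write $d = \prod_{p \in \supp(d)} p^{v_p(d)}$, a product of exactly $s$ pairwise coprime prime power factors. CRT then yields the ring isomorphism
$$
\Z/d\Z \ \cong \ \prod_{p \in \supp(d)} \Z/p^{v_p(d)}\Z,
$$
under which reduction mod $d$ corresponds to the tuple of reductions mod each $p^{v_p(d)}$.

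Next, I would apply Proposition~\ref{lm2} prime by prime: for each $p \in \supp(d)$, with $r = v_p(d)$, the set $\num(\pp(\varphi_c))$ reduces to at most two (opposite) classes mod $p^{r}$. Consequently, the image of $\num(\pp(\varphi_c))$ inside the product ring above has cardinality at most $\prod_{p \in \supp(d)} 2 = 2^s$, and hence so does its image in $\Z/d\Z$ via the CRT isomorphism.

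There is no real obstacle here: the statement is essentially a repackaging of Proposition~\ref{lm2} combined with CRT. The only mild care required is to check that the hypothesis $d \in 4\N$ needed by Proposition~\ref{lm2} is inherited, which is immediate since it is part of the present hypotheses.
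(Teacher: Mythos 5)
Your proof is correct and follows exactly the same route as the paper: factor $d$ into its $s$ prime-power components, apply Proposition~\ref{lm2} to each to get at most two classes per factor, and combine via the Chinese Remainder Theorem to bound the number of classes mod $d$ by $2^s$.
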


\begin{proof} Set $\supp(d)=\{p_1, \dots, p_s\}$ and $d=p_1^{r_1}\dots p_s^{r_s}$. 
By Proposition~\ref{lm2}, the set $\num(\pp(\varphi_c))$ covers at most 2 distinct classes mod $p_i^{r_i}$ for all $1 \le i \le s$. Therefore, by the Chinese Remainder Theorem, this set covers at most $2^s$ distinct classes mod $d$.
\end{proof}

The particular case in Proposition~\ref{lm2} where $X,Y \in \num(\per(\varphi_c))$ and $X \equiv +Y \bmod p^r$ for all $p \in \supp(d)$, i.e. where $X \equiv Y \bmod d$, has a somewhat surprising consequence and will be used more than once in the sequel.

\begin{proposition}\label{X equiv Y} Let $ c \in \Q$ such that $\den(c)=d^2$ with $d \in 4\N$. Let $X,Y \in \num(\per(\varphi_c))$ be distinct. If $X \equiv Y \bmod d$, then $X+Y=\pm 2$.
\end{proposition}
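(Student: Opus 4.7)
The plan is to combine the hypothesis $X\equiv Y\bmod d$ with the structural result of Corollary~\ref{lm1}, and then squeeze the power of $2$ using the assumption $4\mid d$.

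First, I would argue that no odd prime factor of $d$ divides $X+Y$. Let $p\in\supp(d)$ be odd. By the hypothesis $X\equiv Y\bmod d$, in particular $p\mid X-Y$. If $p$ also divided $X+Y$, then $p$ would divide $2X$, hence $X$, contradicting the fact that $X=\num(x)$ is coprime to $d$ (and hence to $p$). Thus the odd primes of $d$ do not divide $X+Y$, and Corollary~\ref{lm1}(iii) yields $X+Y=\pm 2^t$ for some $t\in\N$.

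Next, I would determine $t$ precisely by working mod $4$. Since $4\mid d$ by assumption and $X,Y$ are coprime to $d$, both $X$ and $Y$ are odd; in particular $t\ge 1$. The hypothesis $X\equiv Y\bmod d$ together with $4\mid d$ gives $X\equiv Y\bmod 4$, hence
\[
X+Y \ \equiv\ 2Y \ \equiv\ 2 \pmod{4},
\]
since $Y$ is odd. Therefore $v_2(X+Y)=1$, which forces $t=1$ and gives $X+Y=\pm 2$, as desired.

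This is a short argument, so there is no real obstacle; the only substantive input beyond bookkeeping is Corollary~\ref{lm1}(iii), which converts the absence of odd prime factors of $d$ in $X+Y$ into the statement that $X+Y$ is (up to sign) a power of $2$. The rest is a congruence computation exploiting that $4\mid d$ forces $X$ and $Y$ to be odd.
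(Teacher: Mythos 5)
Your proof is correct and follows essentially the same route as the paper: both use Corollary~\ref{lm1} together with the observation that a common prime factor of $X+Y$ and $X-Y$ would divide $2X$, and both pin down $t=1$ by a mod $4$ computation using that $X,Y$ are odd and $4\mid d$. The only cosmetic difference is that you rule out odd prime divisors of $d$ and invoke part (iii), while the paper shows directly that every prime factor of $X+Y$ equals $2$ via part (i).
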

\begin{proof} As $X,Y$ are coprime to $d$, they are odd. We claim that $\supp(X+Y)=\{2\}$. Indeed, let $p$ be any prime factor of $X+Y$. Then $p$ divides $d$ by Corollary~\ref{lm1}. Hence $p$ divides $X-Y$ since $d$ divides $X-Y$ by hypothesis. Therefore $p$ divides $2X$, whence $p=2$ since $X$ is odd. It follows that $X+Y = \pm 2^t$ for some integer $t \ge 1$. Since $d \in 4\N$ and $d$ divides $X-Y$, it follows that $4$ divides $X-Y$. Hence $4$ cannot also divide $X+Y$ since $X,Y$ are odd. Therefore $t=1$, i.e. $X+Y = \pm 2$ as desired.
\end{proof}

\begin{example} Consider the case $c=29/16$ of Example~\ref{ex length 3}, where $\varphi_c$ admits the cycle $\C=(-1/4,-7/4,5/4)$. In $\num(\C)=(-1,-7,5)$, only $-7$ and $5$ belong to the same class mod $4$, and their sum is $-2$ as expected. 
\end{example}

%%%%%%%%%%%%%%%%%%%%%%%%%%%%%%%%%%
\subsection{From $\Z/d\Z$ to $\Z$}
Our objective now is to derive from Proposition~\ref{lm2} the upper bound $|\per(\varphi_c)| \le 2^s+2$ announced earlier. For that, we shall need the following two auxiliary results.

\begin{lemma}\label{lm3}Let  $k\in \N$. Up to order, there are only two ways to express $2^k$ as $2^k=\varepsilon _12^{k_1}+\varepsilon _22^{k_2}$
with $\varepsilon _1,\varepsilon _2=\pm 1$ and $k_1,k_2\in \N$.
\end{lemma}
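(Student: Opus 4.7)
The plan is to carry out an elementary case analysis on the sign pattern $(\varepsilon_1, \varepsilon_2) \in \{\pm 1\}^2$. First I would exploit the symmetry of the expression in its two summands: since the statement asks for decompositions up to order, I may assume without loss of generality that $k_1 \ge k_2$. The case $\varepsilon_1 = \varepsilon_2 = -1$ is immediately excluded because the right-hand side is negative while $2^k > 0$, so only two sign patterns survive.

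The key observation in both remaining cases is the same: factoring $2^{k_2}$ out of the right-hand side yields $2^k = 2^{k_2}\bigl(\varepsilon_1 2^{k_1-k_2} + \varepsilon_2\bigr)$, and the second factor is either $\pm 2$ (when $k_1 = k_2$) or an odd integer (when $k_1 > k_2$). Since the only odd positive divisor of $2^k$ is $1$, this drastically restricts the possibilities. In the case $\varepsilon_1 = \varepsilon_2 = +1$, the factor $2^{k_1-k_2}+1$ with $k_1 > k_2$ is odd and strictly greater than $1$, hence incompatible; therefore $k_1 = k_2$, and the equation reduces to $2^{k_2+1} = 2^k$, giving $k_1 = k_2 = k-1$.

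In the mixed-sign case, I may take $\varepsilon_1 = +1$ and $\varepsilon_2 = -1$ thanks to the symmetry already invoked; we then need $k_1 > k_2$ to avoid the trivial sum. The factor $2^{k_1-k_2}-1$ is odd and positive, so must equal $1$, which forces $k_1 - k_2 = 1$ and hence $k_2 = k$, $k_1 = k+1$. Collecting the two cases, only the decompositions
\[
2^k \;=\; 2^{k-1}+2^{k-1} \qquad \text{and} \qquad 2^k \;=\; 2^{k+1}-2^k
\]
can occur, which is exactly the claim.

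There is no real obstacle; the argument reduces to the trivial fact that $2^m \pm 1$ is a power of $2$ only for $m \in \{0,1\}$. The only bookkeeping point worth mentioning is the edge case $k = 0$, for which $2^{k-1}$ is not of the required form, so only one of the two decompositions survives. This is consistent with the statement read as an upper bound ("only two ways" meaning at most two), and everywhere the lemma is applied in the sequel we will have $k \ge 1$ anyway.
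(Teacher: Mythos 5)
Your proof is correct and follows essentially the same route as the paper's: factor out the smaller power of $2$ and use parity (an odd factor of $2^k$ must be $1$) together with positivity to pin down the two decompositions $2^{k-1}+2^{k-1}$ and $2^{k+1}-2^k$. Your remark about the edge case $k=0$, where only the second decomposition survives, is a point the paper's proof silently skips, and it is harmless since the lemma is only ever used as an upper bound.
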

\begin{proof} We may assume $k_1 \le k_2$. There are two cases. 
\begin{enumerate}
\item If $k_1=k_2$, then $2^{k_1}(\varepsilon _1+\varepsilon _2)=2^k$, implying $k_1=k_2=k-1$ and $\varepsilon _1=\varepsilon _2=1$.
\item If $k_1 < k_2$, then $2^{k_1}(\varepsilon _1+\varepsilon _2 2^{k_2-k_1})=2^k$, implying $k=k_1=k_2-1$, $\varepsilon _1=-1$ and $\varepsilon _2=1$. \qedhere
\end{enumerate}
\end{proof}

\bigskip
\begin{proposition}\label{lm4} Let $ c \in \Q$ such that $\den(c)=d^2$ with $d \in 4\N$. If there are distinct pairs $\{X_1,Y_1\}, \{X_2,Y_2\} \subseteq \num(\per(\varphi _c))$ such that $X_1+Y_1 = \pm (X_2+Y_2) = \pm 2^k$ for some $k \in \N$, then
$$
X_1+Y_1 = -(X_2+Y_2).
$$
\end{proposition}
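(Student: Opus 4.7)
I plan to argue by contradiction: suppose the pairs are distinct but $X_1+Y_1 = X_2+Y_2 = s$ with $s \in \{2^k, -2^k\}$, and derive a contradiction. A short opening remark is that the four numerators $X_1, Y_1, X_2, Y_2$ must then be pairwise distinct: $X_1 = X_2$ combined with the equal-sums condition forces $Y_1 = Y_2$ and collapses the two pairs, and $X_1 = Y_2$ forces $Y_1 = X_2$, which again collapses them.

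The central claim is that each of the four \emph{cross-sums}
\[
X_1+X_2,\quad Y_1+Y_2,\quad X_1+Y_2,\quad X_2+Y_1
\]
is of the form $\pm 2^t$. By Corollary~\ref{lm1}(iii), it suffices to show that no odd prime $p \in \supp(d)$ divides any of them. Fix such a $p$ and let $r = v_p(d)$. Since $\gcd(s,p) = 1$, the identity $X_1 + Y_1 = s$ rules out the branch $X_1 \equiv -Y_1 \pmod{p^r}$ of the dichotomy in Proposition~\ref{lm2}, so $X_1 \equiv Y_1 \pmod{p^r}$, and likewise $X_2 \equiv Y_2 \pmod{p^r}$. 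If $p$ divided $X_1+X_2$, then Proposition~\ref{lm2} would give $X_1 \equiv -X_2 \pmod{p^r}$, hence $Y_1 \equiv -Y_2 \pmod{p^r}$, forcing $p \mid Y_1+Y_2$; but then $p$ would divide $(X_1+X_2)+(Y_1+Y_2) = 2s$, contradicting that $p$ is odd and coprime to $s$. The same reasoning excludes $p$ from dividing the other three cross-sums.

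With all four cross-sums now of the form $\pm 2^t$, I would apply Lemma~\ref{lm3} twice. Applied to $(X_1+X_2) + (Y_1+Y_2) = 2s$, it yields either $X_1+X_2 = Y_1+Y_2 = s$---in which case $X_2 = s - X_1 = Y_1$, contradicting distinctness---or $\{X_1+X_2,\, Y_1+Y_2\} = \{-2s,\, 4s\}$. In the latter case, solving the resulting linear system expresses $Y_1, X_2, Y_2$ as explicit affine functions of $X_1$ with coefficients in $s\Z$. A second application of Lemma~\ref{lm3} to $(X_1+Y_2) + (X_2+Y_1) = 2s$ then pins $X_1$ to one of a short list of values of the form $\alpha s$ or $\beta s/2$ with $\alpha, \beta$ a small integer. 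For $k \ge 2$, none of these values is odd, contradicting $\gcd(X_1, d) = 1$; for $k = 1$, substituting back in each surviving sub-case produces a coincidence among the four numerators (typically $X_2 = Y_2$ or $X_1 = Y_1$), again contradicting distinctness.

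The main obstacle is the final case analysis: it is entirely mechanical but demands care with the sign of $s$ and with the sub-cases produced by the two uses of Lemma~\ref{lm3}. The genuinely nontrivial conceptual step is the powers-of-$2$ reduction of the cross-sums via Proposition~\ref{lm2}; once that is in hand, everything reduces to arithmetic bookkeeping with Lemma~\ref{lm3}.
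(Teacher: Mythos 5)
Your proof is correct, and its central step coincides with the paper's: both arguments use Proposition~\ref{lm2} together with Corollary~\ref{lm1}(iii) to show that, under the assumption $X_1+Y_1=X_2+Y_2=\pm 2^k$, every pairwise sum among $X_1,Y_1,X_2,Y_2$ is $\pm 2^t$ (the paper phrases this as all four numerators lying in a single nonzero class mod each odd $p\in\supp(d)$; your route through the cross-sums is equivalent). The difference is in the endgame. You apply Lemma~\ref{lm3} twice, solve the resulting linear systems for $Y_1,X_2,Y_2$ in terms of $X_1$, and finish by a parity argument for $k\ge 2$ plus explicit substitution for $k=1$; I checked the sub-cases ($X_1\in\{-s,\,-5s/2,\,s/2,\,2s,\,7s/2\}$) and each does yield either an even $X_1$ or a collapse such as $X_1=Y_1$ or $X_2=Y_2$, so the argument goes through, though you leave that verification as ``mechanical'' and should also note that $k=0$ is vacuous since the numerators are odd. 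The paper avoids this case analysis entirely with a pigeonhole: the three ways of grouping $\{X_1,Y_1,X_2,Y_2\}$ into two pairs give three decompositions
$$(X_1+Y_1)+(X_2+Y_2)=(X_1+X_2)+(Y_1+Y_2)=(X_1+Y_2)+(X_2+Y_1)=\pm 2^{k+1}$$
of $\pm 2^{k+1}$ as a signed sum of two powers of $2$, and since Lemma~\ref{lm3} permits only two such decompositions up to order, two of the groupings coincide as multisets, forcing two of the four numerators to be equal and hence the two pairs to coincide. That version is sign- and $k$-independent and needs no back-substitution; you may want to adopt it to shorten the final third of your argument.
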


\begin{proof}
 Assume for a contradiction that $X_1+Y_1 = X_2+Y_2 = \pm 2^k$. Let $p \in \supp(d)$ be odd. We claim that $X_1, X_2, Y_1,Y_2$ \emph{all belong to the same nonzero class mod $p$}. Indeed, we know by Proposition~\ref{lm2} that $X_1, X_2, Y_1,Y_2$ belong to \emph{at most two opposite classes} mod $p$. Since $p$ does not divide $X_i+Y_i$ for $1 \le i \le 2$, i.e. $X_i \not\equiv -Y_i \bmod p$, it follows that $X_i \equiv Y_i \bmod p$. Since $X_1 \equiv \pm X_2 \bmod p$ and $X_1+Y_1 = X_2+Y_2$, it follows that $X_1 \equiv X_2 \bmod p$ and the claim is proved. Therefore no sum of two elements in $\{X_1,Y_1,X_2, Y_2\}$ is divisible by $p$. Hence, by the third point of Corollary~\ref{lm1}, any sum of two distinct elements in $\{X_1,Y_1,X_2, Y_2\}$ is equal up to sign to a power of $2$. Moreover, we have
\begin{eqnarray*}
 \pm 2^{k+1} & = & (X_1+Y_1)+(X_2+Y_2)  \\
& = & (X_1+X_2)+(Y_1+Y_2) \\
& = & (X_1+Y_2)+(X_2+Y_1).
\end{eqnarray*}
It now follows from Lemma \ref{lm3} that at least two of $X_1, Y_1, X_2,Y_2$ are equal. This contradiction concludes the proof.
\end{proof}

\begin{notation}
For any $h \in \Z$, we shall denote by $\pi_h \colon \Z \to \Z/h\Z$ the canonical quotient map mod $h$.
\end{notation}

\begin{theorem}\label{thm 4} Let $ c \in \Q$ such that $\den(c)=d^2$ with $d \in 4\N$. Let $m = |\pi_d(\num(\per(\varphi _c)))|$. 
Then $$m \le |\per(\varphi_c)|\,\le\, m+2.$$
\end{theorem}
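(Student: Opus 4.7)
The plan is to analyze the fibers of the restriction of $\pi_d$ to the set $N := \num(\per(\varphi_c))$. The lower bound $m \le |\per(\varphi_c)|$ is immediate: since $\pi_d|_N$ is a surjection onto an $m$-element set, $m \le |N|$, and $|N| = |\per(\varphi_c)|$ because multiplication by $d$ is a bijection from $\per(\varphi_c)$ to $N$ (all periodic points have denominator $d$ by Proposition~\ref{walde russo}). So the focus is entirely on the upper bound. Writing $f(\bar{c})$ for the size of the fiber of $\pi_d|_N$ above a class $\bar{c} \in \pi_d(N)$, one has $|\per(\varphi_c)| - m = \sum_{\bar{c}} (f(\bar{c}) - 1)$, so it suffices to show (i) each fiber has at most two elements and (ii) at most two fibers have exactly two elements.

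For (i), suppose three distinct $X, Y, Z \in N$ all reduce to the same class modulo $d$. Applying Proposition~\ref{X equiv Y} to each of $\{X,Y\}, \{X,Z\}, \{Y,Z\}$ yields $X+Y, X+Z, Y+Z \in \{+2, -2\}$. Setting $a = X+Y$, $b = X+Z$, $c = Y+Z$, a direct linear solve gives $2X = a+b-c$, $2Y = a-b+c$, $2Z = -a+b+c$. A quick inspection of the eight sign patterns in $\{\pm 2\}^3$ shows that in every case at least two of $X, Y, Z$ coincide, contradicting distinctness.

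For (ii), suppose three distinct fibers each have size two, producing three pairs $\{X_i, Y_i\} \subseteq N$ ($i = 1, 2, 3$) that are pairwise disjoint (since each element of $N$ belongs to a unique fiber). By (i) and Proposition~\ref{X equiv Y}, the sum $X_i + Y_i$ lies in $\{\pm 2\} = \{\pm 2^1\}$ for each $i$. Proposition~\ref{lm4} then forces the sums of any two of the three pairs to be opposite in sign; but three values in $\{+2,-2\}$ cannot be pairwise of opposite signs by pigeonhole, a contradiction. Combining (i) and (ii) gives $\sum_{\bar{c}}(f(\bar{c}) - 1) \le 2$, hence $|\per(\varphi_c)| \le m + 2$.

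The main obstacle is step (i): the argument has essentially no slack, and it is the point at which one feels the tightness of the $m+2$ bound. The case analysis is entirely elementary, but it is crucial that the three distinct signs in $\{\pm 2\}^3$ never produce three distinct solutions; if that failed, a single fiber could be arbitrarily large and the whole argument would collapse. Everything else is bookkeeping once Propositions~\ref{X equiv Y} and~\ref{lm4} are in hand.
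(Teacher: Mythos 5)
Your proof is correct and follows essentially the same route as the paper: the lower bound via injectivity of the numerator map, fibers of size at most two via Proposition~\ref{X equiv Y} (the paper uses the pigeonhole on the three sums in $\{\pm 2\}$ rather than your explicit linear solve, but it is the same argument), and at most two size-two fibers via Proposition~\ref{lm4}. No gaps.
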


\begin{proof} The first inequality is obvious. We now show  $|\per(\varphi_c)| \le m+2$.

\smallskip
\noindent
\textbf{Claim.} \emph{Each class mod $d$ contains at most 2 elements of $\num(\per(\varphi _c))$.} 

\smallskip
Assume the contrary. Then there are three distinct elements $X, Y, Z$ in $\num(\per(\varphi _c))$ such that $X \equiv Y \equiv Z \bmod d$. By Proposition~\ref{X equiv Y}, all three sums $X+Y$, $X+Z$ and $Y+Z$ belong to $\{\pm 2\}$. Hence two of them coincide, e.g. $X+Y=X+Z$. Therefore $Y=Z$, a contradiction. This proves the claim.

\smallskip
Now, assume for a contradiction that $|\per(\varphi_c)| \ge m+3$. The claim then implies that there are at least 3 distinct classes mod $d$ each containing two distinct elements in $\num(\per(\varphi _c))$. That is, there are six distinct elements $X_1, Y_1$, $X_2,Y_2$ and $X_3, Y_3$ in $\num(\per(\varphi _c))$ such that $X_i\equiv Y_i \bmod d$ for $1 \le i \le 3$. Again, Proposition~\ref{X equiv Y} implies $X_i+Y_i=\pm 2$ for $1 \le i \le 3$. This situation is excluded by Proposition~\ref{lm4}, and the proof is complete.
\end{proof}

\begin{rem}\label{m+2}
The above proof shows that if $|\per(\varphi_c)|= m+2$, then there are exactly two classes mod $d$ containing more than one element of $\num(\per(\varphi_c))$, and both classes contain exactly two such elements. Denoting $\{X_1,Y_1\}, \{X_2,Y_2\} \subset \num(\per(\varphi_c))$ these two special pairs, the proof further shows that $X_1+Y_1=\pm 2=-(X_2+Y_2)$.
\end{rem}

\begin{cor}\label{Cor} Let $ c \in \Q$ such that $\den(c)=d^2$ with $d \in 4\N$. Let $s = |\supp(d)|$. Then
$$
|\per(\varphi_c)| \,\le\, 2^s+2.
$$
\end{cor}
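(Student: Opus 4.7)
The plan is essentially a one-line chain of two earlier results. First I would invoke Theorem~\ref{thm 4}, which gives $|\per(\varphi_c)| \le m + 2$ where $m = |\pi_d(\num(\per(\varphi_c)))|$ is the number of residue classes modulo $d$ occupied by numerators of periodic points.

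Then I would bound $m$ using Corollary~\ref{preper mod d}. Since every periodic point is preperiodic, we have $\num(\per(\varphi_c)) \subseteq \num(\pp(\varphi_c))$, so the image $\pi_d(\num(\per(\varphi_c)))$ is contained in $\pi_d(\num(\pp(\varphi_c)))$. By Corollary~\ref{preper mod d}, the latter set has cardinality at most $2^s$. Hence $m \le 2^s$.

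Combining, $|\per(\varphi_c)| \le m + 2 \le 2^s + 2$, as claimed. There is no real obstacle here: all the substantive work was already done in Proposition~\ref{lm2} (to control numerators of preperiodic points prime-by-prime), in Corollary~\ref{preper mod d} (to assemble these constraints via the Chinese Remainder Theorem), and in Theorem~\ref{thm 4} (to pass from counting residue classes back to counting actual periodic points, using Proposition~\ref{X equiv Y} and Proposition~\ref{lm4} to rule out having three or more distinct elements per class or three coincident pairs with equal signed sum).
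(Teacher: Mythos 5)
Your proposal is correct and follows exactly the paper's own argument: combine Theorem~\ref{thm 4} ($|\per(\varphi_c)| \le m+2$) with Corollary~\ref{preper mod d} ($m \le 2^s$, via the inclusion $\per(\varphi_c) \subseteq \pp(\varphi_c)$). Nothing to add.
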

\begin{proof} We have $|\per(\varphi_c)|\,\le\, m+2$ by the above theorem, and $m \le 2^s$ by Corollary~\ref{preper mod d}.
\end{proof}

%%%%%%%%%%%%%%%%%%%%%%%%%%%%%%%%%%
\subsection{Numerator dynamics}

Let $c = a/d^2 \in \Q$ with $a,d$ coprime integers. Closely related to the map $\varphi_c$ is the map $d^{-1}\varphi_a \colon \Q \to \Q$. By definition, this map satisfies
$$
d^{-1}\varphi_{a}(x) = (x^2-a)/d
$$
for all $x \in \Q$. As was already implicit earlier, we now show that cycles of $\varphi_c$ in $\Q$ give rise, by taking numerators, to cycles of $d^{-1}\varphi_{a}$ in $\Z$.

\begin{lemma}\label{num dyn} Let $c = a/d^2 \in \Q$ with $a,d$ coprime integers. Let $\C \subset \Q$ be a cycle of $\varphi_c$. Then $\num(\C) \subset \Z$ is a cycle of $d^{-1}\varphi_{a}$ of length $|\C|$.
\end{lemma}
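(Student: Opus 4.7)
The plan is to invoke Proposition~\ref{walde russo} to write every element of $\C$ with the common denominator $d$, and then translate the action of $\varphi_c$ on $\C$ into an action of $d^{-1}\varphi_{a}$ on the set of numerators. This is essentially a change-of-variable bookkeeping argument, clearing the denominator $d^2$ of $\varphi_c$ down to $d$.

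First, since $\C \subseteq \per(\varphi_c) \subseteq \pp(\varphi_c)$ is nonempty and $\den(c)=d^2$, Proposition~\ref{walde russo} yields $\den(x)=d$ for every $x \in \C$. Writing $x = X/d$ with $X=\num(x) \in \Z$ coprime to $d$, the assignment $x \mapsto dx = X$ defines a bijection $\C \to \num(\C)$; in particular, $\num(\C) \subseteq \Z$ and $|\num(\C)| = |\C|$.

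Second, I would check that this bijection intertwines $\varphi_c$ with $d^{-1}\varphi_{a}$. For any $x = X/d \in \C$ with image $y = \varphi_c(x) \in \C$, write $y = Y/d$ with $Y = \num(y)$. Then
$$
\frac{Y}{d} \;=\; \varphi_c\!\left(\frac{X}{d}\right) \;=\; \frac{X^2}{d^2} - \frac{a}{d^2} \;=\; \frac{X^2 - a}{d^2},
$$
so $Y = (X^2-a)/d = d^{-1}\varphi_{a}(X)$. This identity lives in $\Z$ because $Y \in \Z$, and it shows that $d^{-1}\varphi_{a}$ carries $\num(\C)$ into itself in a way conjugate, via the bijection $x \mapsto X$, to the cyclic permutation $\varphi_c|_\C$. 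Hence $\num(\C)$ is a cycle of $d^{-1}\varphi_{a}$ of length exactly $|\C|$.

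There is no genuine obstacle here: once Proposition~\ref{walde russo} has pinned down the common denominator $d$ on $\pp(\varphi_c)$, everything reduces to the single identity $\varphi_c(X/d) = (X^2-a)/d^2$. The value of the lemma lies not in its difficulty but in the fact that it lets subsequent arguments study rational cycles of $\varphi_c$ through the integer dynamics of $d^{-1}\varphi_{a}$ on $\num(\per(\varphi_c)) \subseteq \Z$.
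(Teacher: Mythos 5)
Your proposal is correct and follows essentially the same route as the paper: apply Proposition~\ref{walde russo} to get the common denominator $d$ on $\C$, then verify the identity $Y=(X^2-a)/d=d^{-1}\varphi_{a}(X)$ for $y=\varphi_c(x)$. The only (welcome) addition is your explicit remark that $x\mapsto dx$ is a bijection $\C\to\num(\C)$, which the paper leaves implicit when asserting the cycle has length $|\C|$.
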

\begin{proof} Recall that $\den(\C)=\{d\}$ by Proposition~\ref{walde russo}. Let $x \in \C$ and $y = \varphi_c(x)$. Let $X=\num(x), Y= \num(y)$. Then $x=X/d, Y=y/d$. We have $y=x^2-c=(X^2-a)/d^2$. Hence $Y=(X^2-a)/d=d^{-1}\varphi_{a}(X)$. In particular, we have the formula
\begin{equation}\label{commut diagram}
(d^{-1}\varphi_{a})(X) = d\varphi_c(X/d)
\end{equation}
for all $X \in \num(\C)$. 
\end{proof}

%%%%%%%%%%%%%%%%%%%%%%%%%%%%%%%%%%
\subsection{The cases $d \not \equiv 0$ mod 3 or mod 5}

\begin{lemma}\label{lm5} Let $c \in \Q$ and $\C \subseteq \per(\varphi_c)$ a cycle of positive length $n$.

\vspace{1mm}
\emph{(i)} If $d \not \equiv 0 \bmod 3$ and $n \ge 3$, then $\num(\C)$ reduces mod $3$ to exactly one nonzero element.  

\vspace{1mm}
\emph{(ii)} If $d \not \equiv 0 \bmod 5$  and $n \ge 4$, then $\num(\C)$ reduces mod $5$ to exactly one or two nonzero elements mod $5$.
\end{lemma}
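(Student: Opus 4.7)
The plan is to lift $\C$ to a cycle of numerators and study its reduction modulo $p$, where $p=3$ for (i) and $p=5$ for (ii), exploiting the fact that squaring is two-to-one on $\F_p^{\ast}$.

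Write $c=a/d^2$ with $a,d$ coprime. By Lemma~\ref{num dyn}, $\num(\C)=(X_1,\dots,X_n)$ is a cycle of the map $\psi := d^{-1}\varphi_a\colon X\mapsto(X^2-a)/d$ in $\Z$. Since $p\nmid d$, $\psi$ induces a self-map $\bar\psi(t)=(t^2-a)/d$ of $\Z/p\Z$, and the reductions satisfy $\bar\psi(\bar X_i)=\bar X_{i+1\bmod n}$. Hence the distinct values among the $\bar X_i$ form a single $\bar\psi$-cycle $C\subseteq\Z/p\Z$ of length $k$ dividing $n$, with each element of $C$ appearing exactly $n/k$ times in the reduced sequence.

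Two structural ingredients drive the argument. First, by Corollary~\ref{lm1}(ii) the $X_i$ are pairwise coprime, so at most one is divisible by $p$; equivalently, $0$ appears at most once among the $\bar X_i$, so $0\in C$ would force $n/k\le 1$. Second, because squaring on $\F_p^{\ast}$ is two-to-one, the image of $\bar\psi$ has exactly $(p+1)/2$ elements, so $k\le(p+1)/2$. Moreover, the image element $-a/d=\bar\psi(0)$ has $\{0\}$ as its only $\bar\psi$-preimage in $\Z/p\Z$, so whenever $-a/d\in C$, its cycle-predecessor must be $0$, forcing $0\in C$.

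For part (i), $p=3$ and $k\le 2$. If $k=2$, the image $\{-a/d,(1-a)/d\}$ has exactly two elements (as $-a\not\equiv 1-a\bmod 3$) and must coincide with $C$; hence $-a/d\in C$ and so $0\in C$. But $k=2$ with $n\ge 3$ forces $n\ge 4$ and $n/k\ge 2$, contradicting coprimality. Thus $k=1$ and $C$ consists of a single fixed point $x$ of $\bar\psi$. Since $n\ge 2$, the case $x=0$ would violate coprimality, so $x\ne 0$ and $\num(\C)$ reduces to exactly one nonzero class mod~$3$.

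For part (ii), $p=5$ and $k\le 3$. The image $\{-a/d,(1-a)/d,(4-a)/d\}$ has exactly three elements, since the pairwise differences $1,3,4$ are all nonzero mod~$5$. If $k=3$, then $C$ is this entire image, so $-a/d\in C$ and $0\in C$; but $n\ge 4$ with $3\mid n$ yields $n\ge 6$ and $n/k\ge 2$ copies of $0$, again contradicting coprimality. So $k\in\{1,2\}$, and in either case $0\notin C$ (else $n/k\ge 2$ since $n\ge 4>k$). Hence $C\subseteq\F_5^{\ast}$ has one or two elements, giving exactly one or two nonzero classes mod~$5$ in the reduction of $\num(\C)$.

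The main subtlety I expect is verifying that the image of $\bar\psi$ in $\F_5$ genuinely consists of three distinct elements, which is what lets us conclude that a length-$3$ cycle of $\bar\psi$ must coincide with the full image and therefore contain $-a/d$ (and so $0$). Everything else is a clean combination of the cycle-length bound coming from two-to-one squaring with the coprimality constraint that prevents $0$ from appearing more than once among the reductions.
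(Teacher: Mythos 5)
Your proof is correct, and it takes a genuinely different route from the paper's. The paper's argument hinges on Corollary~\ref{lm1}(i): for distinct $X,Y\in\num(\C)$ and any prime $q\nmid d$ one has $X+Y\not\equiv 0\bmod q$, so the reduced $n$-sequence contains no pair of opposite residues and at most one $0$; for $q=3$ this leaves at most one nonzero value, for $q=5$ at most two, and a short pigeonhole on cyclically consecutive terms (using $n\ge 3$, resp.\ $n\ge 4$) forces the sequence to be constant, resp.\ constant or $2$-periodic. You instead use only the weaker Corollary~\ref{lm1}(ii) (pairwise coprimality, hence at most one $\bar X_i=0$) and compensate with two structural facts about the reduced map $\bar\psi$: its image has $(p+1)/2$ elements because squaring is two-to-one on $\F_p^{\ast}$, which bounds the reduced cycle length $k$, and the unique $\bar\psi$-preimage of $\bar\psi(0)$ is $0$, which forces $0$ into any cycle of the maximal length $(p+1)/2$ and lets the multiplicity count $n/k\ge 2$ kill that case. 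Every step checks out, including the verification that the image really has $2$ (resp.\ $3$) distinct elements mod $3$ (resp.\ $5$) and the correct handling of $k\mid n$. The trade-off: the paper's route also yields that the nonzero residues occurring are pairwise non-opposite (a fact reused in spirit in Proposition~\ref{cor1}), while yours isolates a cleaner general principle --- modulo any prime $q\nmid d$, once $n$ exceeds $(q+1)/2$ the cycle $\num(\C)$ reduces to a $\bar\psi$-cycle of length at most $(q-1)/2$ avoiding $0$ --- which would apply verbatim to other primes $q$.
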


\begin{proof} First some preliminaries. Of course $\varphi_{c}$ induces a cyclic permutation of $\C$. By Proposition~\ref{walde russo}, we have $c= a/d^2$ with $a,d$ coprime integers. By Lemma~\ref{num dyn}, the rational map $d^{-1}\varphi_{a}$ induces a cyclic permutation of $\num(\C)$, say
$$
d^{-1}\varphi_{a} \colon \num(\C) \to \num(\C).
$$
Let $X,Y \in \num(\C)$ be distinct. Then $\supp(X+Y) \subseteq \supp(d)$ by Corollary~\ref{lm1}. In particular, let $q$ be any prime number such that $d \not\equiv 0 \bmod q$. Then
\begin{equation}\label{nonzero}
X+Y \not\equiv 0 \bmod q.
\end{equation}
Since $d$ is invertible mod $q$, we may consider the reduced map
\begin{equation}\label{mod q}
f = \pi_q \circ (d^{-1}\varphi_{a}) \colon \Z/q\Z \to \Z/q\Z,
\end{equation}
where $f(x)=d^{-1}(x^2-a)$ for all $x \in \Z/q\Z$. Thus, we may view $\pi_q(\num(\C))$ as a sequence of length $n$ in $\Z/q\Z$, where each element is cyclically mapped to the next by $f$. Note that \eqref{nonzero} implies that this $n$-sequence \emph{does not contain opposite elements $u,-u$ of $\Z/q\Z$}, and in particular 
contains \emph{at most one} occurrence of $0$.

\smallskip
We are now ready to prove statements (i) and (ii). 

\smallskip
(i) Assume $d \not\equiv 0 \bmod q$ where $q=3$. By the above, the $n$-sequence $\pi_3(\num(\C))$ consists of at most one $0$ and all other elements equal to some $u \in \{\pm 1\}$. Since $n \ge 3$, this $n$-sequence contains two cyclically consecutive occurrences of $u$. Therefore $f(u) = u$. Hence $\pi_3(\num(\C))$ contains $u$ as it unique element repeated $n$ times.

\smallskip
(ii) Assume $d \not\equiv 0 \bmod q$ where $q=5$. Since $n \ge 4$ and the $n$-sequence $\pi_5(\num(\C))$ contains at most one $0$, it must contains three cyclically consecutive nonzero elements $u_1,u_2,u_3 \in \Z/5\Z \setminus \{0\}$. Since that set contains at most two pairwise non-opposite elements, it follows that $u_i=u_j$ for some $1 \le i < j \le 3$. Now $u_1 \mapsto u_2 \mapsto u_3$ by $f$. Therefore, if either $u_1=u_2$ or $u_2=u_3$, it follows that the whole sequence $\pi_5(\num(\C))$ consists of the one single element $u_2$ repeated $n$ times. On the other hand, if $u_1\not=u_2$, then $u_1=u_3$. In this case, the $n$-sequence $\pi_5(\num(\C))$ consists of the sequence $u_1,u_2$ repeated $n/2$ times. This concludes the proof.
\end{proof}

\begin{example} Consider the case $c=a/d^2=29/16$ of Example~\ref{ex length 3}, where $\varphi_c$ admits the cycle $\C= (-1/4, -7/4,5/4)$. Then $\num(\C)=(-1,-7,5)$, a cycle of length $3$ of the map $d^{-1}\varphi_{a}=4^{-1}\varphi_{29}$. That cycle reduces mod $3$ to $(-1,-1,-1)$, as expected with statement \emph{(i)} of the lemma. Statement \emph{(ii)} does not apply since $n=3$, and it would fail anyway since $\num(\C)$ reduces mod $5$ to the sequence $(-1,-2,0)$.
\end{example}

%%%%%%%%%%%%%%%%%%%%%%%%%%%%%%%%%%
\subsection{Main consequences}

\begin{proposition}\label{cor1}
Let $c=a/d^2 \in \Q$ with $a,d$ coprime integers and with $d \in 4\N$. Assume $d \not\equiv 0 \bmod 3$. Let $s=|\supp(d)|$. For every rational cycle $\C$ of $\varphi_c$, we have
$$
|\C| \,\le\, 2^s+1.$$
\end{proposition}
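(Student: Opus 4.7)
My plan is to sharpen the bound $|\C|\le 2^s+2$ from Corollary~\ref{Cor} by excluding the extremal case using the mod~$3$ constraint of Lemma~\ref{lm5}(i). Since $d\in 4\N$ forces $s\ge 1$, we have $2^s+1\ge 3$, so the claim is trivial for cycles of length at most $2$. Hence I would immediately reduce to $n:=|\C|\ge 3$, and invoke Lemma~\ref{lm5}(i): all numerators in $\num(\C)$ then reduce to one and the same class $u\in\{\pm 1\}$ mod $3$.

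Next I would re-run the proof of Theorem~\ref{thm 4} with $\C$ in place of $\per(\varphi_c)$. Setting $m'=|\pi_d(\num(\C))|$, the Claim in that proof (each class mod $d$ contains at most two elements of $\num(\per(\varphi_c))$, hence of $\num(\C)$) together with Proposition~\ref{lm4} gives, by the same counting argument, $n\le m'+2$; and Corollary~\ref{preper mod d} bounds $m'\le 2^s$. So I arrive at $n\le 2^s+2$, and the whole point is to rule out equality.

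Assume for contradiction that $n=2^s+2$, so $n=m'+2$. Exactly as in Remark~\ref{m+2}, I would extract two distinct classes mod $d$ each containing two elements of $\num(\C)$, yielding pairs $\{X_1,Y_1\},\{X_2,Y_2\}$ with $X_i\equiv Y_i\bmod d$. Proposition~\ref{X equiv Y} then forces $X_i+Y_i=\pm 2$, and Proposition~\ref{lm4} forbids both sums to have the same sign; so I may arrange $X_1+Y_1=+2$ and $X_2+Y_2=-2$.

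The mod~$3$ obstruction now closes the argument: since all four of $X_1,Y_1,X_2,Y_2$ are $\equiv u\pmod 3$, both pair sums lie in the class $2u\pmod 3$, yet $+2\equiv 2$ and $-2\equiv 1\pmod 3$ are distinct---contradiction. The only step demanding care is the transfer of the proof of Theorem~\ref{thm 4} from $\per(\varphi_c)$ to the single cycle $\C$, but that proof only uses properties of periodic points which descend to subsets, so it carries over verbatim.
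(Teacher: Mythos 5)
Your proposal is correct and follows essentially the same route as the paper: bound $|\C|$ by $2^s+2$ via the argument of Theorem~\ref{thm 4} and Corollary~\ref{Cor}, extract in the extremal case two pairs with $X_1+Y_1=2$ and $X_2+Y_2=-2$ (the paper simply cites Remark~\ref{m+2} for this, noting that $|\C|=2^s+2$ forces $\C=\per(\varphi_c)$), and derive a contradiction from Lemma~\ref{lm5}(i), which places all four numerators in a single nonzero class mod~$3$. Your re-running of the counting argument directly on $\C$ is a harmless variant, and your explicit check that the relevant lemmas descend to subsets of $\per(\varphi_c)$ is sound.
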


\begin{proof}
 By Corollary \ref{Cor}, we have $|\C| \,\le\, 2^s+2$. If $|\C| = 2^s+2$ then, by Remark \ref{m+2}, there exist two pairs $\{X_1, Y_1\}, \{X_2, Y_2\}$ in $\num(\C)$ such that $X_1+Y_1=2$ and $X_2+Y_2=-2$. Since $d \not\equiv 0 \bmod 3$, Lemma~\ref{lm5} implies that $X_1, X_2, Y_1,Y_2$ reduce to the same nonzero element $u$ mod $3$. This contradicts the equality $X_1+Y_1=-(X_2+Y_2)$.
\end{proof}

\begin{theorem}\label{thm 5 bis} If $\den(c)$ admits at most two distinct prime factors, then $\varphi_c$ satisfies the Flynn-Poonen-Schaefer conjecture.
\end{theorem}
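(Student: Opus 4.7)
My plan is to split the argument by $s = |\supp(d)|$, where $d^2 = \den(c)$ (so $s \le 2$ by hypothesis), and to exclude all rational cycles of length $n \ge 4$. Let $\C$ be such a cycle; then Theorem~\ref{4|d} forces $d \in 4\N$, hence $2 \in \supp(d)$ and $s \ge 1$.

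If $s = 1$, then $\supp(d) = \{2\}$, so $3 \nmid d$, and Proposition~\ref{cor1} gives $n \le 2^1 + 1 = 3$, contradicting $n \ge 4$. If $s = 2$, write $\supp(d) = \{2, p\}$ for an odd prime $p$. When $p \ne 3$, Proposition~\ref{cor1} still applies and gives $n \le 2^2 + 1 = 5$; combined with Morton's Theorem~\ref{thm1} ($n \ne 4$) and Theorem~\ref{thm2} of Flynn, Poonen and Schaefer ($n \ne 5$), this forces $n \le 3$, again a contradiction. We are thus reduced to $\supp(d) = \{2, 3\}$, where Corollary~\ref{Cor} only gives $n \le 6$ and Theorems~\ref{thm1}, \ref{thm2} exclude $n \in \{4, 5\}$; it remains to rule out $n = 6$.

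For this final sub-case, I would invoke Zieve's Theorem~\ref{thm3} with $p = 5$. Since $5 \nmid d$, each $\alpha \in \C$ has $v_5(\alpha) = 0$ and hence lies in $\Z_5$, and $c \in \Z_5$ as well, so $\varphi_c \in \Z_5[t]$ is a valid input. By Lemma~\ref{lm5}(ii), the period $m$ of $\bar{\alpha}$ under $\bar{\varphi}_c$ in $\Z/5\Z$ satisfies $m \in \{1, 2\}$. The Zieve inequality $5^{e-1} \le 2/(5-1) = 1/2$ admits no integer $e \ge 1$, so the wild $p$-power branch in the theorem is empty, forcing $n \in \{m, mr\}$, where $r$ is either $\infty$ or the multiplicative order of $(\varphi_c^m)'(\alpha) \bmod 5$ in the cyclic group $(\Z/5\Z)^{*}$ of order $4$, so $r \in \{1, 2, 4\}$. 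Consequently $n \in \{1, 2, 4, 8\}$, contradicting $n = 6$.

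The hard part is exactly this last sub-case: the modular machinery of Proposition~\ref{cor1} is unavailable as soon as $3 \mid d$, and Morton and Flynn--Poonen--Schaefer together only exclude lengths $4$ and $5$. The plan circumvents this by exploiting the fact that the constant $2/(p-1)$ in Zieve's bound shrinks fast enough at $p = 5$ to kill the wild $p$-power branch; paired with the mod-$5$ period bound supplied by Lemma~\ref{lm5}(ii), only four candidate cycle lengths survive, and $6$ is not among them.
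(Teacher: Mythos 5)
Your proposal is correct and follows essentially the same route as the paper: the case split on $s$ and on whether $3\mid d$, the bounds from Proposition~\ref{cor1} and Corollary~\ref{Cor}, Morton and Flynn--Poonen--Schaefer to exclude lengths $4$ and $5$, and Zieve's Theorem~\ref{thm3} at the prime $5$ combined with Lemma~\ref{lm5}(ii) to kill length $6$. Your write-up is in fact slightly more explicit than the paper's about why the cycle and $\varphi_c$ live over $\Z_5$ and why the wild branch $5^{e-1}\le 1/2$ is empty (only note that one merely has $v_5(\alpha)\ge 0$, not necessarily $=0$, which is all that is needed).
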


\begin{proof}
Let $\C$ be a rational cycle of $\varphi _c$ of length $n \ge 3$. Then $d$ is even and hence $s \geq 1$. 

\smallskip
{\tiny $\bullet$} If $s=1$, then  $d$ is a power of  $2$. By Corollary \ref{cor1}, we have $ |\per(\varphi _c)| \, \leq 2^1+1=3$ and $|\C| \le 3$. See also \cite{EF}.

\smallskip
{\tiny $\bullet$} Assume now $s=2$. Then $d=2^{2r_1}p^{r_2}$ where $p$ is an odd prime. By Theorem~\ref{thm 4},  we have $|\C| \le |\per(\varphi _c)| \le 6$. By Theorems \ref{thm1} and \ref{thm2}, we have $|\C| \not= 4,5$. It remains to show $|\C| \not= 6$. We distinguish two cases. If $p \not= 3$, then $|\C| \leq 2^2+1=5$ by Corollary \ref{cor1} and we are done. Assume now $p=3$, so that $d=2^{2r_1}3^{r_2}$. Let $m$ denote the number of classes of $\num(\C)$ mod $q=5$. It follows from Lemma~\ref{lm5} that $m \le 2$. Since the order of every element in $(\Z/ 5\Z)^*$ belongs to $\{1, 2, 4\}$, it follows from Zieve's Theorem~\ref{thm3} that $|\C|$ is a power of $2$. Hence $|\C| \in \{1, 2, 4\}$ and we are done.
\end{proof}

%%%%%%%%%%%%%%%%%%%%%%%%%%%%%%%%%%

\end{document}